\newcommand{\R}{\mathbb{R}}
\newcommand{\N}{\mathbb{N}}
\newcommand{\B}{Y}
\newcommand{\cB}{{\mathcal{B}}}
\newcommand{\cC}{{\mathcal{C}}}
\def\diam{\mathop{\rm diam}\nolimits}
\newtheorem{theorem}{Theorem}[section]
\newtheorem{lemma}[theorem]{Lemma}
\newtheorem{proposition}{Proposition}
\theoremstyle{definition}
\newtheorem{definition}[theorem]{Definition}
\newtheorem{remark}{Remark}
\title{On the Computation of Attractors for Delay Differential Equations}
\author[M. Dellnitz, M. Hessel-von Molo A. Ziessler]{Michael Dellnitz, Mirko Hessel-von Molo and Adrian Ziessler}
\email{dellnitz@uni-paderborn.de}
\email{mirkoh@uni-paderborn.de}
\email{ziessler@uni-paderborn.de}
\date{}
\begin{document}

\begin{abstract}
\noindent In this work we present a novel framework for the computation of
finite dimensional invariant sets of infinite dimensional dynamical
systems. It extends a classical subdivision technique \cite{DH97} for the computation of
such objects of finite dimensional systems to the infinite
dimensional case by utilizing results on embedding techniques for
infinite dimensional systems. We show how to implement this approach for
the analysis of delay differential equations and illustrate the
feasibility of our implementation by computing invariant sets for
three different delay differential equations.  
\end{abstract}

\maketitle

\centerline{\scshape M.~Dellnitz, M.~Hessel-von Molo and A.~Ziessler}
\medskip
{\footnotesize
\centerline{Chair of Applied Mathematics}
\centerline{University of Paderborn}
\centerline{33095 Paderborn, Germany}
}

\section{Introduction}

Over the last two decades so-called \emph{set oriented numerical methods}
have been developed in the context of the numerical treatment of dynamical systems
(e.\ g.\ \cite{DH97, DJ99, FD03, FLS10}).
The basic idea is to cover the objects of interest -- for instance \emph{invariant sets}
or \emph{invariant measures} -- by outer approximations which are created via multilevel
subdivision techniques. These techniques have been used in several different
application areas such as molecular dynamics (\cite{SHD01}), astrodynamics (\cite{DJLMPPRT05}) or
ocean dynamics (\cite{FHRSSG12}).

So far the applicability of the subdivision scheme is restricted to
finite dimensional dynamical systems, i.\ e.\ ordinary differential equations or
finite dimensional discrete dynamical systems. In this paper we extend
this technique to the infinite dimensional context. More precisely, we
develop a set oriented numerical technique which allows us to compute
low-dimensional invariant sets for infinite dimensional dynamical systems.
Rather than using a straightforward approach based on an appropriate
combination of Galerkin expansions and subdivision steps we follow
a completely novel path and utilize \emph{embedding results} in our
numerical treatment.

The first result on embeddings in the dynamical systems context is
the celebrated \emph{Takens Embedding Theorem} \cite{T81}. Takens
has shown that an invariant set -- in his context this
has to be a compact manifold of dimension $d$ -- can generically be
reconstructed using the so-called \emph{observation map} which consists of
observations of the dynamical behavior at an appropriate number (at least $2d+1$) of consecutive
snapshots in time.
This result has been extended by Sauer et al.\ in \cite{embedology} to the context of
compact invariant sets of box counting dimension $d$. There it has been
shown that the same observation map can be used for the reconstruction
of the invariant set
as long as more than $2d$ consecutive snapshots in time are used.
Moreover in this work the notion of ``genericity" has been replaced by the
more intuitive notion of ``prevalence". Finally, in \cite{R05} Robinson extended the
results obtained in \cite{embedology} to dynamical systems on infinite
dimensional Banach spaces. It turns out that also here the same observation map can be used
to reconstruct invariant sets of finite dimensional box counting dimension $d$.
However, in addition to the dimension of the set another quantity comes into play,
namely the \emph{thickness exponent} $\sigma$. Roughly speaking this exponent measures
how well the invariant set can be approximated by finite dimensional
subspaces of the underlying Banach space. The lower bound $2d$ on the number
of snapshots has to be replaced by $2(1+\sigma)d$ accordingly.

We remark
that there are several further extensions of Takens' theorem. For instance in
\cite{Stark:99} forced systems are considered, and in
\cite{mezic:comparisoncomplexbehav} one can find a stochastic version of
this result.

Our results in this paper are based on Robinson's embedding theorem. In fact, we will combine
the reconstruction based on the observation map with the classical subdivision
techniques developed in \cite{DH97}. Assuming that a bound on the box-counting
dimension and the thickness exponent of the invariant set are known we use the
observation map and its inverse to define a dynamical system $\varphi$ in the embedding
space of dimension $k > 2(1+\sigma)d$. Then the subdivision scheme is 
applied to compute the reconstructed invariant set for $\varphi$. Observe that this way we can
always perform the numerical approximation within a finite dimensional space of fixed
dimension $k$, and this is in contrast to Galerkin based approaches where one would
have to extend the expansions in order to improve the quality of the approximation.

The numerical approach we are proposing is in principle applicable to arbitrary
infinite dimensional dynamical systems. However, here we will restrict our
attention to delay differential equations with constant delay in the numerical
realization. The applications to e.\ g.\ partial differential equations will be done in
future work.

A detailed outline of the paper is as follows. In Section~\ref{sec:takens} we briefly summarize
the infinite dimensional embedding theory introduced in \cite{R05}. 
In Section~\ref{sec:comp_emb_att} we employ the main result of \cite{R05}
for the construction of a numerical approach for the computation of compact
finite dimensional attractors of infinite dimensional dynamical systems. First we construct
a continuous dynamical system $\varphi$ on the embedding space using a generalization
of the well-known Tietze extension theorem \cite[I.5.3]{ds} which is due to
Dugundji \cite[Theorem~4.1]{Dugundji51}. Then we
extend the results from \cite{DH97} to the situation
where the underlying dynamical system $\varphi$ is just continuous (and not
homoemorphic). A numerical realization for the computation of attractors of
delay differential equations is given in Section~\ref{sec:num_real_DDE}.
Finally, in Section~\ref{sec:numex}, we illustrate the efficiency of our novel
approach for three different delay differential equations.

\section{Infinite Dimensional Takens Embedding}\label{sec:takens}

We start with a short review of the contents of \cite{R05}.
We consider a dynamical system of the form

\begin{equation}\label{eq:DS}
u_{j+1} = \Phi (u_j),\quad j=0,1,\ldots,
\end{equation} 

where $\Phi:\B\rightarrow \B$ is Lipschitz continuous on a Banach space $\B$.
Moreover we assume that $\Phi$ has an invariant compact set $A$, that is
\[
\Phi(A) = A.
\]

Later on we will additionally assume that $A$ is a global attractor in the sense that
it attracts all bounded sets within $\B$ as $t\rightarrow \infty$.

For the statement of the main result of \cite{R05} we need three
particular notions:
prevalence \cite{embedology}, upper box counting dimension and thickness
exponent \cite{HuntKaloshin99}.

\begin{definition}[Prevalence]
	A Borel subset $S$ of a normed linear space $V$ is
	\emph{prevalent} if there is a finite dimensional subspace $E$ of
	$V$ (the `probe space') such that for each $v \in V,\ v+e$ belongs
	to $S$ for (Lebesgue) almost every $e\in E$.
\end{definition}

\begin{definition}[Upper box counting dimension] 
	Let $\B$ be a Banach space, and let $X\subset \B$ be compact.
	For $\varepsilon >0$, 
	denote by $N_\B(X,\varepsilon)$ the minimal number of balls of radius
	$\varepsilon$ (in the norm of $\B$) necessary to cover the set
	$X$. Then 
	\begin{equation} 
		d(X; \B) = \limsup\limits_{\varepsilon \rightarrow 0} \frac{\log
    N_\B(X,\varepsilon)}{-\log \varepsilon}
    = \limsup_{\varepsilon\to 0}\; -\log_{{\varepsilon}} N_\B(X,\varepsilon)
 \end{equation} 
 denotes the upper box-counting dimension of $X$.
\end{definition}

\begin{definition}[Thickness exponent] 
 	Let $\B$ be a Banach space, and let $X\subset \B$ be compact.
	For $\varepsilon >0$, 
	denote by $d(X, \varepsilon)$ the minimal
	dimension of all finite dimensional subspaces $V\subset \B$ such
	that every point of $X$ lies within distance $\varepsilon$ of $V$;
	if no such $V$ exists, $d(X, \varepsilon) = \infty$. Then
	\[
		\sigma(X, \B) := \limsup_{\varepsilon \to 0}
		\;-\log_{{\varepsilon}} d(X, \varepsilon)
	\] 
	is called the \emph{thickness exponent} of $X$ in $\B$. 
\end{definition}

From this definition one sees that essentially, $\sigma(X, \B)$ captures how
well $X$ can be approximated from within finite dimensional subspaces of
$\B$. In \cite{KukavicaRobinson2004}, 
as a more practical expression Kukavica and Robinson prove  that
\[ 
	\sigma(X, \B) = \limsup_{n\to\infty} \frac{\log n}{-\log
	\varepsilon_\B(X, n)},
	\quad\text{i.e. }
	\varepsilon_\B(X,n) \sim n^{-1/\sigma(X, \B)}
\]
where $\varepsilon_\B(X,n)$ is the minimum distance between $X$ and any
$n$-dimensional linear subspace of $\B$. 

These notions are essential in answering the question when a delay
embedding technique applied to an invariant subset $A\subset \B$ will work
generically. More precisely, the results are as follows.

\begin{theorem}[\cite{HuntKaloshin99}]
	Let $\B$ be a Banach space and $X\subset \B$ compact, with upper
	box counting dimension $d(X; \B) =: d$ and thickness exponent
	$\sigma(X, \B) =: \sigma$. Let $N>2d$ be an integer, and let $\alpha
	\in\R$ with
	\[ 0 < \alpha < \frac{ N-2d }{ N\cdot(1+\sigma) }. \]
	Then for a prevalent set of linear maps $\mathcal{L}:\B\to\R^N$ there
	is $C>0$ such that 
	\[ C\cdot \| \mathcal{L}(x-y)\|^\alpha \geq \|x-y\| \quad\text{for all }x,
	y\in X. \]
	\label{thm:HK99}
\end{theorem}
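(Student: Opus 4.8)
The plan is to deduce Theorem~\ref{thm:HK99} from the general Hunt--Kaloshin-type machinery by showing that the bad set of linear maps is shy, equivalently that its complement is prevalent, and to do so by probing with a finite-dimensional space of linear maps built from ``generic'' observation functionals. First I would fix a countable dense subset $\{x_i\}$ of the compact set $X$, and for a fixed scale control the set of pairs $(x,y)\in X\times X$ with $\|x-y\|$ comparable to $2^{-k}$; the upper box-counting dimension $d$ bounds how many such ``$\varepsilon$-separated'' pairs there are (roughly $N_\B(X,\varepsilon)^2$ of them up to the covering scale), while the thickness exponent $\sigma$ controls, via $\varepsilon_\B(X,n)\sim n^{-1/\sigma}$, how large a linear subspace one needs so that $X$ is well approximated inside it. The interplay of these two exponents is exactly what produces the quantitative bound $0<\alpha<(N-2d)/(N(1+\sigma))$.

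The key steps, in order, are as follows. (i) Choose, for each $\delta>0$, a finite-dimensional subspace $V_\delta\subset\B$ of dimension $n(\delta)\lesssim \delta^{-\sigma-}$ such that every point of $X$ lies within $\delta$ of $V_\delta$, and replace $X$ by its projection into $V_\delta$ up to a controlled error; this reduces the estimate to a finite-dimensional Hunt--Kaloshin statement on $V_\delta$, at the cost of an additive $\delta$-perturbation. (ii) On $V_\delta$, use as probe space the space of linear maps $\R^{n(\delta)}\to\R^N$ (or rather a fixed finite-dimensional family of functionals extended from $V_\delta$), and estimate, for a fixed pair $x\neq y$ in $X$, the Lebesgue measure of the set of such maps $\mathcal L$ for which $\|\mathcal L(x-y)\|<\eta\|x-y\|$: since $x-y$ is a fixed nonzero vector, $\mathcal L(x-y)$ is (after normalizing) an $\R^N$-valued linear image of a Gaussian/Lebesgue-distributed parameter, so this measure is $\lesssim \eta^N$. (iii) Sum this estimate over an $\varepsilon$-net of pairs: the number of pairs at scale $\varepsilon$ is $\lesssim \varepsilon^{-2d-}$, and one chooses $\eta=\eta(\varepsilon)$ so that $\eta^N\cdot\varepsilon^{-2d-}$ is summable over dyadic scales $\varepsilon=2^{-k}$; this forces $\eta(\varepsilon)\sim\varepsilon^{\beta}$ with $\beta>2d/N$, and then a Borel--Cantelli argument shows that for almost every $\mathcal L$ in the probe space, $\|\mathcal L(x-y)\|\geq c\,\|x-y\|^{1+\beta'}$ for all $x,y\in X$ simultaneously, with $\beta'$ absorbing the net discretization and the $\delta$-error from step (i). (iv) Rearranging $\|\mathcal L(x-y)\|\geq c\|x-y\|^{1/\alpha}$ gives the claimed Hölder bound $C\|\mathcal L(x-y)\|^\alpha\geq\|x-y\|$, and tracking the exponents through steps (i)--(iii) — where the $\delta$-approximation contributes the factor $1+\sigma$ and the pair-counting contributes $N-2d$ over $N$ — yields precisely the admissible range for $\alpha$. (v) Finally, observe that a probe space that works after projection to $V_\delta$ for all small $\delta$ can be assembled into a single finite-dimensional probe space of linear maps $\B\to\R^N$ (using that the relevant functionals can be taken from a fixed finite-dimensional space, e.g.\ by pre-composing with a fixed sequence of approximating projections), establishing prevalence in the sense of the definition above.

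I expect the main obstacle to be step (i) together with the bookkeeping in step (iii): one must choose the sequence of approximation scales $\delta_k$ (tied to the subspaces $V_{\delta_k}$) and the tolerance sequence $\eta_k$ simultaneously with the dyadic scale $\varepsilon_k=2^{-k}$, so that three competing requirements hold at once — the projection error $\delta_k$ is negligible compared to $\|x-y\|\sim\varepsilon_k$, the dimension $n(\delta_k)$ stays polynomially controlled (this is where $\sigma$, and in fact any $\sigma'>\sigma$, enters), and the measure estimates $\sum_k \eta_k^N\,\varepsilon_k^{-2d'}$ converge. Making these choices compatible is exactly what pins down the inequality $\alpha<(N-2d)/(N(1+\sigma))$, and any slack one leaves (replacing $d$ by $d'>d$ and $\sigma$ by $\sigma'>\sigma$) is harmless since $\alpha$ is required to be \emph{strictly} less than the threshold. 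The remaining steps — the Gaussian/Lebesgue measure estimate for a single fixed difference vector, the Borel--Cantelli passage, and the algebraic rearrangement into Hölder form — are routine.
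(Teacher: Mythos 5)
The paper does not prove this statement: Theorem~\ref{thm:HK99} is quoted verbatim from \cite{HuntKaloshin99} and used as a black box, so there is no internal proof to compare your argument against. Measured against the actual Hunt--Kaloshin argument, your outline does reproduce its architecture --- finite-dimensional approximation of $X$ governed by the thickness exponent, a per-pair measure estimate over a finite-dimensional probe space of linear maps, summation over a net of pairs at dyadic scales controlled by the box-counting dimension, and a Borel--Cantelli passage.

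The genuine gap is in step (ii), which you dismiss as routine but which is the one step that is actually hard in infinite dimensions. For a fixed nonzero $z = x-y \in \B$, it is simply not true that the set of maps $\mathcal{L}$ in an arbitrary finite-dimensional probe space with $\|\mathcal{L}z\| < \eta\|z\|$ has measure $O(\eta^N)$: a norm-one functional on an infinite-dimensional space can annihilate $z$, so the estimate requires the probe space to contain, for \emph{every} difference vector arising from $X$ simultaneously, functionals that are quantitatively non-degenerate on it. This is precisely the job of the subspaces $V_\delta$: since $x-y$ lies within $2\delta$ of $V_\delta$, the probe is built from Hahn--Banach extensions of a dual (Auerbach) basis of $V_\delta$, and the resulting per-pair measure bound is not $O(\eta^N)$ but degrades polynomially in $n(\delta)=\dim V_\delta \sim \delta^{-\sigma}$, while one must also enforce $\delta \ll \eta\,\|x-y\|$ so that the approximation error does not swamp the tolerance. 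It is this multiplicative loss in $n(\delta)$, coupled with that constraint on $\delta$, that produces the factor $1+\sigma$ in the denominator; your step (i) treats $V_\delta$ as contributing only an additive $\delta$-perturbation, which by itself would not yield the stated threshold. For the same reason, the coupling of $\delta_k$, $\eta_k$, $\varepsilon_k$ that you defer to the end is not postponable bookkeeping: a wrong coupling gives a different (incorrect) admissible range for $\alpha$, so the inequality $\alpha < (N-2d)/(N(1+\sigma))$ must be derived from these choices, not merely tracked. Finally, in step (v) the probe space cannot be assembled a posteriori from scale-dependent pieces --- prevalence requires a single finite-dimensional probe space fixed before the argument begins --- though this is repairable by building the probe from the functionals attached to a fixed sequence of subspaces $V_{\delta_k}$ at the outset, with suitably decaying weights.
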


Note that this implies that a prevalent set of linear maps
$\B\to\R^N$ will be one-to-one on $X$. Using this theorem, the following
result concerning the delay embedding technique can be proven.

\begin{theorem}[\cite{R05}]
	\label{thm:R05}
Suppose that the upper box counting dimension of $A$ is $d(A)=d$, and that
$A$ has a thickness exponent $\sigma$. Choose an integer $k> 2(1+\sigma)d$
and suppose further that the set $A_p$
of $p$-periodic points of $\Phi$ satisfies $d(A_p) < p/(2+2\sigma)$ for $p=1,\ldots,k$.
Then for a prevalent set of Lipschitz maps $f:\B\rightarrow \R$ the observation map
$D_k[f,\Phi] : \B \rightarrow \R^k$ defined by
\begin{equation}
	\label{def:D_k} 
	D_k[f,\Phi] (u) = (f(u),f(\Phi(u)),\ldots,f(\Phi^{k-1}(u)))^T
\end{equation}
is one-to-one on the invariant set $A$. 
\end{theorem}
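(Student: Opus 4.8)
The plan is to follow Robinson's strategy from \cite{R05}, which in turn adapts the argument of Sauer–Yorke–Casdagli \cite{embedology} to the infinite-dimensional setting, the extra ingredient being the thickness exponent. First I would reduce the claim that the observation map $D_k[f,\Phi]$ is one-to-one on $A$ to an injectivity statement that can be tested pointwise: $D_k[f,\Phi]$ fails to be injective on $A$ precisely when there exist distinct $u, v \in A$ with $f(\Phi^j(u)) = f(\Phi^j(v))$ for all $j = 0, \ldots, k-1$. So the goal is to show that the ``bad'' set of Lipschitz maps $f$ — those for which such a collision exists — is not prevalent, i.e.\ is shy, by exhibiting a suitable finite-dimensional probe space of Lipschitz functions on $\B$ and showing that for every $f$, almost every perturbation $f + e$ (with $e$ ranging over the probe space) is injective along orbits.

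The main body of the argument is a measure-theoretic covering estimate. I would decompose the set of pairs $(u,v) \in A \times A$ according to how many of the $k$ coordinates are ``forced'' to coincide by the orbit structure — this is where the periodic-point hypothesis $d(A_p) < p/(2+2\sigma)$ enters. Away from periodic points, the $k$ points $u, \Phi(u), \ldots, \Phi^{k-1}(u)$ and their $v$-counterparts are, generically, $2k$ distinct points, so one genuinely has $k$ independent linear conditions $f(\Phi^j(u)) - f(\Phi^j(v)) = 0$ to destroy. Near $p$-periodic points only about $k/p$ of these conditions are independent, but there the relevant invariant set $A_p$ is small enough (dimension $< p/(2+2\sigma)$) that the codimension still beats the dimension. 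Quantitatively: using Theorem~\ref{thm:HK99} (with a linear map into a sufficiently high-dimensional $\R^N$) to get a Hölder-type lower bound $\|x - y\| \le C \|\mathcal{L}(x-y)\|^\alpha$ on $A$, one controls how the $2d$-dimensional ``diagonal-avoiding'' part of $A \times A$ is covered by boxes; combined with the exponent bookkeeping $k > 2(1+\sigma)d$ this forces the measure of bad perturbation-parameters to vanish. The Fubini/transversality step — showing that for a fixed box of pairs, the set of probe-space parameters $e$ for which $f + e$ collides on that box has small Lebesgue measure, then summing over a cover and letting the box size tend to $0$ — is the standard prevalence mechanism; I would set it up exactly as in \cite[\S\,4]{embedology} with the thickness modification from \cite{R05}.

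The key construction that makes this work is the probe space itself. Following Robinson, I would take the probe space to consist of functions of the form $e(u) = \sum_i c_i \, \ell_i(u)$ (or Hölder bump-type combinations of a fixed finite family of bounded linear functionals $\ell_i : \B \to \R$), chosen so that for any $2k$ distinct points of $\B$ the induced evaluation map on the probe space is surjective onto $\R^{2k}$ — here the thickness exponent dictates how many linear functionals are needed to ``see'' the set $A$ faithfully, which is exactly why $\sigma$ appears in the bound on $k$. One then checks that such $e$ are genuinely Lipschitz on $\B$ (so the perturbed map stays in the admissible class) and that the whole probe space is finite-dimensional, as required by the definition of prevalence.

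The hard part will be the interplay between the periodic-point stratification and the box-counting estimates — keeping track, for each $p \le k$, of the correct number of independent linear constraints and matching it against $d(A_p)$, while simultaneously carrying the Hölder loss from Theorem~\ref{thm:HK99} through the covering argument without degrading the exponent below the critical threshold $2(1+\sigma)d$. Everything else (the Fubini argument, verifying Lipschitz regularity of probe functions, the reduction to pointwise injectivity) is routine; the delicate accounting is entirely in showing that the ``bad'' set has measure zero in the probe space, uniformly in the base function $f$.
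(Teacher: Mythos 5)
A point of order first: the paper does \emph{not} prove this theorem. It is Robinson's embedding theorem, quoted from \cite{R05} and used as a black box; the paper's own mathematical work begins only afterwards (with Proposition~\ref{prop:phi_is_cont} and the convergence analysis of the subdivision scheme). So there is no in-paper proof to compare your proposal against, and it has to be judged against Robinson's original argument.

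Measured against that, your proposal is a plausible roadmap of the actual proof --- reduction of injectivity to collision pairs, prevalence via a finite-dimensional probe space of perturbations, a covering/Fubini estimate over $A\times A$ away from the diagonal, a separate treatment of low-period points, and the use of Theorem~\ref{thm:HK99} to transfer box-counting information into a finite-dimensional ambient space at the cost of a H\"older exponent. But it is a roadmap, not a proof: the entire content of the theorem lives in the step you explicitly defer as ``the delicate accounting.'' The per-box measure estimate for bad probe parameters, the summation over roughly $\varepsilon^{-2d}$ box pairs, and the absorption of the H\"older loss $\alpha<(N-2d)/(N(1+\sigma))$ must be combined to show the total vanishes precisely when $k>2(1+\sigma)d$; this is exactly where $\sigma$ enters and none of it is carried out. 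Two further concrete slips. First, a probe space spanned by \emph{linear} functionals $\ell_i$ cannot realize arbitrary prescribed values at $2k$ distinct points of $\B$ (distinct points may be affinely dependent); Robinson, following \cite{embedology}, takes polynomials (suitably truncated to be Lipschitz) in the Hunt--Kaloshin coordinates $\mathcal{L}(u)\in\R^N$, which is what makes the evaluation map at $2k$ distinct points surjective. Second, for a pair of $p$-periodic points the conditions $f(\Phi^j(u))=f(\Phi^j(v))$, $j=0,\dots,k-1$, collapse to $p$ independent ones (each repeated about $k/p$ times), not to ``$k/p$'' of them; it is the comparison of this codimension $p$ with $2(1+\sigma)\,d(A_p)$ that yields the hypothesis $d(A_p)<p/(2+2\sigma)$. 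As it stands the proposal identifies the right skeleton but leaves the theorem's quantitative core unproved.
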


\begin{remark}
	\label{rmk:ext_R05}
	Following an observation already made in
	\cite[Remark 2.9]{embedology}, we note that this result may be generalized to
	the case where several independent observables are evaluated. More
	precisely, also for a prevalent set of Lipschitz maps
	$f_1, \dots, f_q:\B\to\R$ the observation map $D_k[f_1, \ldots, f_q]:
	\B\to\R^k$,
	\begin{equation}
		u \mapsto (%
		f_1(u), \ldots, f_1(\Phi^{k_1-1}(u)),%
		\ldots, %
		f_q(u), \ldots, f_q(\Phi^{k_q-1}(u)))^T
		\label{def:D_kvar}
	\end{equation}
	is one-to-one on $A$, provided that $ k =\sum_{i=1}^q k_i >
	2(1+\sigma)\cdot d$ and $d(A_p) < p/(2+2\sigma)$ for $p\leq \max(k_1,
	\ldots, k_q)$.
\end{remark}

\section{Computation of Embedded Attractors via Subdivision}
\label{sec:comp_emb_att}

\subsection{Finite-dimensional Embeddings of Attractors}
\label{ssec:findim_emb_att}
In this section we employ Theorem \ref{thm:R05} in order to construct a
method for the computation of compact, finite dimensional attractors of
infinite dimensional dynamical systems on a Banach space $\B$. 

Let us denote by $A_k$ the image of $A\subset \B$ under $D_k[f,\Phi]$, that is
\[
A_k = D_k[f,\Phi] (A),
\]
where $D_k$ is the map defined in Theorem \ref{thm:R05} and $f$ is chosen
such that $D_k$ is one-to-one on $A$.

We now develop a set oriented numerical technique for the
approximation of the set $A_k$. First, we define a dynamical system
on $\R^k$ for which $A_k$ is an invariant set, on which the dynamics is
conjugate to that of $\Phi$ on $A$.
For this we define the map $\varphi : \R^k \rightarrow \R^k$ by
\begin{equation}
	\label{def:phi}
\varphi = R \circ \Phi \circ E, 
\end{equation}
where $E:\R^k \rightarrow \B$ and $R: \B \rightarrow \R^k$ are an embedding 
and a restriction, respectively, that satisfy 
\begin{equation} 
	\label{eq:condRE} 
	(E\circ R)(u) = u\quad \forall u\in A
	\quad\text{ and }\quad
	(R\circ E)(x) = x\quad \forall x\in A_k.
\end{equation}
More concretely, we define the map $R=D_k$ by the right-hand side of
(\ref{def:D_k}) (see Figure~\ref{fig:kommdia}).

\begin{figure}[H]
    \centering
        \includegraphics[width=0.5\textwidth]{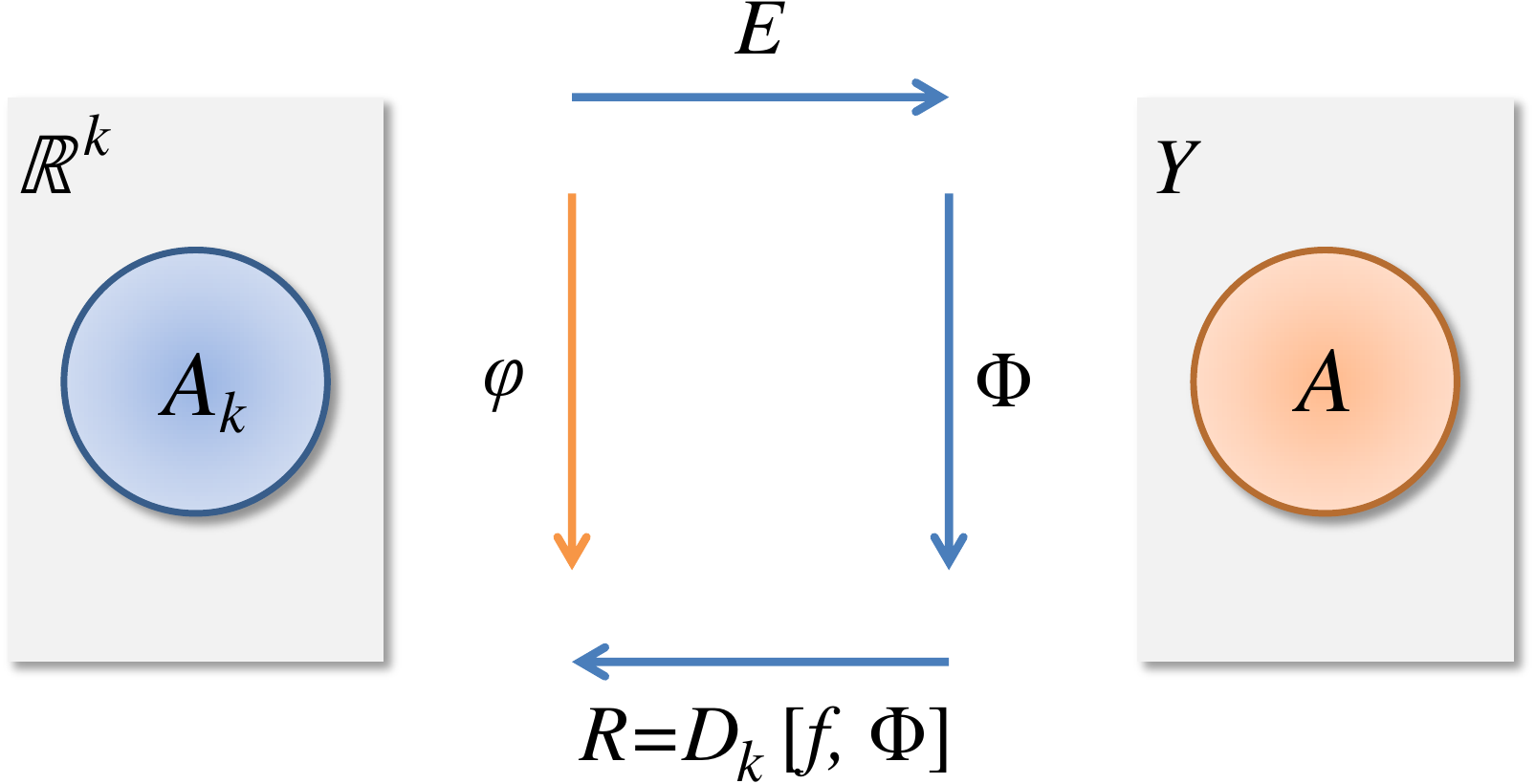} 
    \caption{Definition of the map $\varphi$.}
    \label{fig:kommdia}
\end{figure}

\begin{remark}
Although $D_k$ is derived from what would commonly be termed an embedding
theorem, we call $R$ a restriction, as it maps from an
infinite dimensional domain into a finite-dimensional set, and $E$ an
embedding, as it maps (or embeds) a finite-dimensional into an
infinite dimensional space.
\end{remark}

The map $E$ is obtained in two steps: Firstly, as $R$ is one-to-one on $A$, 
the requirement that
\begin{equation}
	R\circ \tilde E(x)  = x \text{ for all }x\in A_k
	\label{eq:ER}
\end{equation} 
uniquely defines a map $\tilde E:A_k\to A$.
In a second step, we extend this map to a
continuous map $E:\R^k\to \B$. To do this, we employ a generalization 
of the well-known Tietze extension theorem \cite[I.5.3]{ds} found by
Dugundji \cite[Theorem~4.1]{Dugundji51}, stated here with notation adapted
to our needs:

\begin{theorem}
	\label{thm:Dugundji}
	Let $X$ be an arbitrary metric space and $\mathcal{A}\subset X$ be closed,
	$V$ a locally convex linear space and $p:\mathcal{A}\to V $ be continuous. 
	Then there is a continuous map $P:X\to V$ with $P_{|\mathcal A} = p$ such that 
	$P(X)$ is contained in the convex hull of $p(\mathcal{A})$. 
\end{theorem}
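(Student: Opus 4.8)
The plan is to reproduce the classical extension argument: write the extension as a locally finite convex combination of values of $p$ governed by a partition of unity on the open set $X\setminus\mathcal A$, with essentially all of the work concentrated in the continuity of the extension along the boundary of $\mathcal A$. First I would use that $X\setminus\mathcal A$, being an open subspace of a metric space, is metrizable and hence paracompact. For $x\in X\setminus\mathcal A$ put $\delta(x)=\operatorname{dist}(x,\mathcal A)>0$ and cover $X\setminus\mathcal A$ by the open balls $B\bigl(x,\tfrac13\delta(x)\bigr)$; paracompactness yields a locally finite open refinement $\{U_\lambda\}_{\lambda\in\Lambda}$ together with a subordinate partition of unity $\{\phi_\lambda\}_{\lambda\in\Lambda}$ (so $\operatorname{supp}\phi_\lambda\subset U_\lambda$). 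For each $\lambda$ I would fix a point $y_\lambda\in X\setminus\mathcal A$ with $U_\lambda\subset B\bigl(y_\lambda,\tfrac13\delta(y_\lambda)\bigr)$ and an ``almost nearest'' point $a_\lambda\in\mathcal A$ with $\operatorname{dist}(y_\lambda,a_\lambda)\le 2\,\delta(y_\lambda)$, and then define
\[
P(x)=p(x)\ \text{ for }x\in\mathcal A,
\qquad
P(x)=\sum_{\lambda\in\Lambda}\phi_\lambda(x)\,p(a_\lambda)\ \text{ for }x\in X\setminus\mathcal A .
\]

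The routine verifications come next. Since $\{U_\lambda\}$ is locally finite, for $x\notin\mathcal A$ only finitely many terms in the sum are nonzero, so $P(x)$ is a genuine finite convex combination of points of $p(\mathcal A)$; combined with $P(x)=p(x)\in p(\mathcal A)$ on $\mathcal A$, this immediately gives $P(X)\subset\operatorname{conv}\bigl(p(\mathcal A)\bigr)$, as required. Continuity of $P$ on the open set $X\setminus\mathcal A$ is clear, because near any such point only finitely many $\phi_\lambda$ are nonzero and the $p(a_\lambda)$ are constant vectors; continuity on the interior of $\mathcal A$ is trivial.

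The main obstacle — and the only real content — is continuity of $P$ at a point $a\in\mathcal A$ that is a limit of points of $X\setminus\mathcal A$. Here I would exploit the geometric estimate that $a_\lambda$ cannot be far from $a$ whenever $\phi_\lambda(x)\neq0$ and $x$ is close to $a$: since $\phi_\lambda(x)\neq0$ forces $x\in U_\lambda\subset B\bigl(y_\lambda,\tfrac13\delta(y_\lambda)\bigr)$, the triangle inequality gives $\delta(y_\lambda)\le\operatorname{dist}(y_\lambda,x)+\operatorname{dist}(x,a)<\tfrac13\delta(y_\lambda)+\operatorname{dist}(x,a)$, hence $\delta(y_\lambda)<\tfrac32\operatorname{dist}(x,a)$, and therefore
\begin{align*}
\operatorname{dist}(a_\lambda,a)
&\le\operatorname{dist}(a_\lambda,y_\lambda)+\operatorname{dist}(y_\lambda,x)+\operatorname{dist}(x,a)\\
&\le 2\delta(y_\lambda)+\tfrac13\delta(y_\lambda)+\operatorname{dist}(x,a)<\tfrac92\,\operatorname{dist}(x,a).
\end{align*}
Now, given a convex neighbourhood $W$ of $0$ in $V$, continuity of $p$ at $a$ supplies $\rho>0$ with $p(b)-p(a)\in W$ for all $b\in\mathcal A$ with $\operatorname{dist}(b,a)<\rho$. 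If $\operatorname{dist}(x,a)<\tfrac29\rho$ then $\operatorname{dist}(a_\lambda,a)<\rho$ for every $\lambda$ with $\phi_\lambda(x)>0$, so $P(x)-p(a)=\sum_\lambda\phi_\lambda(x)\bigl(p(a_\lambda)-p(a)\bigr)$ is a convex combination of elements of $W$ and hence lies in $W$. This gives $P(x)\to P(a)$ as $x\to a$ and completes the proof; note that it is precisely this last step that uses the local convexity of $V$, since for a general topological vector space the convex combination of the vectors $p(a_\lambda)-p(a)$ need not remain in $W$.
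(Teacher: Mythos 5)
The paper does not actually prove this statement: it is quoted verbatim (with adapted notation) from Dugundji's 1951 paper and used as a black box, so there is no in-paper argument to compare against. Your proof is correct and is essentially Dugundji's classical argument --- partition of unity on $X\setminus\mathcal A$ subordinate to a refinement of the cover by balls $B\bigl(x,\tfrac13\delta(x)\bigr)$, near-nearest points $a_\lambda\in\mathcal A$, and the triangle-inequality estimate $\operatorname{dist}(a_\lambda,a)<\tfrac92\operatorname{dist}(x,a)$ to get continuity at the boundary; the constants check out, and you correctly isolate local convexity as the ingredient that keeps the convex combination inside the neighbourhood $W$. The only (cosmetic) omission is the half of the continuity argument at a boundary point $a$ where the approaching points $x$ lie in $\mathcal A$ itself, which is immediate from continuity of $p$.
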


Using this theorem in the situation introduced above, we obtain the
following 

\begin{proposition}
	\label{prop:phi_is_cont}
	There is a continuous map $\varphi:\R^k \to \R^k$ satisfying
	\begin{equation}
		\varphi(R(u)) = R(\Phi(u)) \text{ for all } u\in A. 
		\label{eq:conjugonA}
	\end{equation}
\end{proposition}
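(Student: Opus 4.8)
The plan is to build $\varphi$ as the composition $R \circ \Phi \circ E$ and then verify the conjugacy relation on $A$. First I would use the fact that $R = D_k[f,\Phi]$ is one-to-one on the compact set $A$, so its restriction $R|_A : A \to A_k$ is a continuous bijection between compact metric spaces, hence a homeomorphism. Its inverse $\tilde E := (R|_A)^{-1} : A_k \to A$ is therefore continuous and satisfies $R \circ \tilde E(x) = x$ for all $x \in A_k$, which is exactly \eqref{eq:ER}. Since $A$ is compact it is in particular closed in $\B$, but more importantly $A_k = R(A)$ is compact and hence closed in $\R^k$, so we may apply the Dugundji extension theorem (Theorem~\ref{thm:Dugundji}) with $X = \R^k$, $\mathcal{A} = A_k$, $V = \B$ (a Banach space, hence locally convex), and $p = \tilde E$. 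This yields a continuous map $E : \R^k \to \B$ with $E|_{A_k} = \tilde E$ and, as a bonus, $E(\R^k)$ contained in the convex hull of $A$ (a boundedness property that will be useful for the numerical realization, though not strictly needed here).

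With $E$ in hand, I would simply define $\varphi := R \circ \Phi \circ E$ as in \eqref{def:phi}. Continuity is immediate: $E$ is continuous by construction, $\Phi$ is Lipschitz continuous by the standing hypothesis, and $R = D_k[f,\Phi]$ is continuous because each coordinate $u \mapsto f(\Phi^{j}(u))$ is a composition of the Lipschitz map $\Phi$ (iterated) with the Lipschitz observable $f$. Hence $\varphi$ is a composition of continuous maps and therefore continuous on all of $\R^k$.

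It remains to check the conjugacy identity \eqref{eq:conjugonA}. Fix $u \in A$. Then $R(u) \in A_k$, so $E(R(u)) = \tilde E(R(u)) = u$ by \eqref{eq:ER} — this is where the one-to-one property of $R$ on $A$ is essential. Consequently
\[
\varphi(R(u)) = R\bigl(\Phi(E(R(u)))\bigr) = R(\Phi(u)),
\]
which is precisely \eqref{eq:conjugonA}. Note that $\Phi(u) \in A$ by invariance $\Phi(A) = A$, so the right-hand side again lands in $A_k$, confirming that $A_k$ is $\varphi$-invariant and that $\varphi|_{A_k}$ is conjugate (via $R|_A$) to $\Phi|_A$.

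I do not expect a serious obstacle here; the argument is essentially a bookkeeping exercise once the right tools are lined up. The one point requiring a little care is the verification that $A_k$ is closed in $\R^k$ (so that Dugundji applies) — this follows from compactness of $A$ together with continuity of $R$, giving compactness and hence closedness of $A_k = R(A)$. A secondary subtlety is that $E$ is in general \emph{not} injective and $R \circ E$ need \emph{not} equal the identity off $A_k$; the conjugacy is only claimed, and only holds, on $A$, which is all that \eqref{eq:conjugonA} asserts.
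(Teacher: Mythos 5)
Your proof is correct and follows essentially the same route as the paper: invert $R|_A$ using the compact-to-Hausdorff homeomorphism argument, extend the inverse via Dugundji's theorem, and define $\varphi = R\circ\Phi\circ E$. The only difference is that you explicitly verify the conjugacy identity \eqref{eq:conjugonA}, which the paper leaves implicit; this is a welcome addition rather than a deviation.
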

\begin{proof}
	By construction, the map $R:\B\to \R^k$ given by (\ref{def:D_k}) is
	continuous (even Lipschitz) and one-to-one. Thus, restricting $R$
	to $A$ we obtain a bijective map $\tilde R: A\to A_k$. As $A$ is
	assumed to be compact and $A_k \subset \R^k$ is Hausdorff, $\tilde
	R$ is a homeomorphism by a well-known theorem from elementary
	topology (see e.~g. \cite[Theorem 17.14]{WillardTopology}). Thus
	we obtain a continuous map $\tilde E:A_k\to A$ as $\tilde E =
	\tilde R^{-1}$. 

	As $\B$ is a normed linear space, it is locally convex. Thus we can
	apply Dugundji's Theorem with $X=\R^k$, $\mathcal A=A_k$, $p =
	\tilde E$ and $V= \B$ to see that there is a continuous map
	$E:\R^k\to \B$ with $E_{|A_k} = \tilde E$. Finally, defining
	$\varphi$ through ~(\ref{def:phi}) we obtain that $\varphi$ is
	continuous as a composition of continous maps. 
\end{proof}

Now we are in a position to approximate the embedded invariant set $A_k$
via the corresponding dynamical system
\begin{equation} 
	x_{j+1} = \varphi(x_j)\quad j=0,1,\ldots .
\end{equation} 
To this end, we employ a subdivision scheme as defined in \cite {DH97}.  

\subsection{Subdivision Scheme}
\label{ssec:subdiv}
We briefly review the classical subdivision procedure. 
Let $Q \subset \R^k$ be a compact set. We define the
\emph{global attractor relative to} $Q$ by
\begin{equation}
    \label{eq:relativeAttractor}
    A_Q = \bigcap_{j\ge 0} \varphi^j(Q).
\end{equation}
The subdivision procedure allows us to approximate this set. Roughly
speaking, the idea of the algorithm is as follows.  We start with a finite
family of (large) compact subsets of $\R^k$ which cover the domain in
which we want to analyze the dynamical behavior. Then we subdivide each of
these sets into smaller ones and throw away subsets which do not contain
part of the relative global attractor.  Continuing the process with the
new collection of (smaller) sets it becomes intuitively clear that this
should lead to a successively finer approximation of the relative global
attractor.

Let us be more precise.
The algorithm generates a sequence $\cB_0,\cB_1,\ldots$ of finite
collections of compact subsets of $\R^k$ such that the diameter
\[
\diam(\cB_\ell) = \max_{B\in\cB_\ell}\diam(B)
\]
converges to zero for $\ell\rightarrow\infty$.  Given an initial
collection $\cB_0$, we inductively obtain $\cB_\ell$ from $\cB_{\ell-1}$
for $\ell=1,2,\ldots$ in two steps.

\begin{enumerate}
\item {\em Subdivision:} Construct a new collection 
$\hat\cB_\ell$ such that
  \begin{equation}
    \label{eq:sd1}
    \bigcup_{B\in\hat\cB_\ell}B = \bigcup_{B\in\cB_{\ell-1}}B
  \end{equation}
  and
  \begin{equation}
    \label{eq:sd2}
    \diam(\hat\cB_\ell) = \theta_\ell\diam(\cB_{\ell-1}),
  \end{equation}
  where $0<\theta_{\min} \le \theta_\ell\le \theta_{\max} < 1$.
\item {\em Selection:} Define the new collection $\cB_\ell$ by
  \begin{equation}
    \label{eq:select}
    \cB_\ell=\left\{B\in\hat\cB_\ell : \exists \hat B\in\hat\cB_\ell
      ~\mbox{such that}~\varphi^{-1}(B)\cap\hat B\ne\emptyset\right\}.
  \end{equation}
\end{enumerate}

The first step guarantees that the collections $\cB_\ell$ consist of
successively finer sets for increasing $\ell$.  In fact, by construction
\[
        \diam(\cB_\ell)\leq\theta_{\max}^\ell\diam(\cB_0)\rightarrow 0\quad 
        \mbox{for $\ell\rightarrow\infty$.}
\]
In the second step we remove each subset whose preimage does neither
intersect itself nor any other subset in $\hat\cB_\ell$.  As we shall see,
this step is responsible for the fact that the unions
$\bigcup_{B\in\cB_\ell}B$ approach the relative global attractor.

Denote by $Q_\ell$ the collection of compact subsets obtained after $\ell$
subdivision steps, that is, 
\[ 
	Q_\ell=\bigcup_{B\in\cB_\ell}B.  
\]
Moreover let $\cB_0$ be a finite collection of closed subsets with
$Q_0=\bigcup_{B\in\cB_0}B=Q$. Then the main convergence result
of \cite{DH97} states that
  \[
  \lim\limits_{k\rightarrow\infty}
  h\left( A_Q, Q_k \right)=0,
  \]
where $h(B,C)$ is the usual Hausdorff distance between two
compact subsets $B,C\subset\R^n$. However, in that work the authors
assume that $\varphi$ is a homeomorphism and not just continuous, as in
the situation here. For this reason, in the following we present a proof
of convergence for continuous $\varphi$.

\subsection{Proof of Convergence}

Essentially, we will be able to follow
the structure of the proof in \cite{DH97}. However, there are some technical
differences, and we will need one additional assumption on $A_Q$.
  
We begin with the following observation:

\begin{lemma}\label{lem:subset}
Suppose that $B\subset Q$ satisfies $B\subset \varphi(B)$. Then $B\subset A_Q$.
\end{lemma}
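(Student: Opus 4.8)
The plan is to show that $B$ is contained in $\varphi^j(Q)$ for every $j\ge 0$, which by the definition \eqref{eq:relativeAttractor} of $A_Q$ immediately gives $B\subset A_Q$. The argument is a straightforward induction on $j$. For the base case $j=0$ we simply use the hypothesis $B\subset Q$. For the inductive step, suppose $B\subset\varphi^j(Q)$. Combining this with the hypothesis $B\subset\varphi(B)$ and the monotonicity of $\varphi$ under inclusions (i.e. $S\subset T$ implies $\varphi(S)\subset\varphi(T)$), we get
\[
B\subset\varphi(B)\subset\varphi\bigl(\varphi^j(Q)\bigr)=\varphi^{j+1}(Q),
\]
which closes the induction.

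Having established $B\subset\varphi^j(Q)$ for all $j\ge 0$, we conclude
\[
B\subset\bigcap_{j\ge 0}\varphi^j(Q)=A_Q,
\]
as desired.

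I do not anticipate a genuine obstacle here: the only facts used are the elementary monotonicity of the image operation and the definition of $A_Q$, and in particular the continuity of $\varphi$ (as opposed to its being a homeomorphism) plays no role in this lemma. The one point worth a moment's care is making sure the chain of inclusions is applied in the right order — one wants $B\subset\varphi(B)$ as the first link and the inductive hypothesis feeding into the monotonicity step — but this is routine.
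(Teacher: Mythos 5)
Your proof is correct and follows essentially the same route as the paper's: both arguments rest on iterating $B\subset\varphi(B)$ together with monotonicity of the image operation and $B\subset Q$ to get $B\subset\varphi^j(Q)$ for all $j$. The paper phrases this by noting $\varphi^j(B)\subset\varphi^{j+1}(B)$ and writing $B=\bigcap_{j\ge 0}\varphi^j(B)\subset\bigcap_{j\ge 0}\varphi^j(Q)=A_Q$, while you package the same chain of inclusions as an explicit induction; the difference is purely presentational.
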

\begin{proof}
From $B\subset \varphi(B)$ it follows that $\varphi^j(B) \subset \varphi^{j+1}(B)$ for all
$j\ge 0$. Hence
\[
B = \bigcap_{j\ge 0} \varphi^j(B) \subset \bigcap_{j\ge 0} \varphi^j(Q) =A_Q.
\]
\end{proof}
We now can prove our first result.

\begin{proposition}
  Let $A_Q$ be the global attractor relative to the compact set $Q$,
  and suppose that the embedded attractor $A_k$ satisfies $A_k \subset Q$.
  Then
  \begin{equation}
    A_k \subset A_Q.
  \end{equation}
\end{proposition}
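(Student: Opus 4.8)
The plan is to show that $A_k$ is itself an invariant set for $\varphi$ contained in $Q$, and then invoke Lemma~\ref{lem:subset}. The central identity I would establish first is that $\varphi(A_k) = A_k$, i.e.\ that $A_k$ is invariant under $\varphi$. This follows from the conjugacy relation~(\ref{eq:conjugonA}): for any $x \in A_k$ write $x = R(u)$ with $u \in A$ (possible since $R$ restricted to $A$ is the bijection $\tilde R : A \to A_k$), and then $\varphi(x) = \varphi(R(u)) = R(\Phi(u))$. Since $\Phi(A) = A$, the point $\Phi(u)$ again lies in $A$, so $R(\Phi(u)) \in A_k$; this gives $\varphi(A_k) \subset A_k$. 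For the reverse inclusion, given $x = R(u) \in A_k$ with $u \in A$, invariance of $A$ under $\Phi$ supplies some $v \in A$ with $\Phi(v) = u$, whence $\varphi(R(v)) = R(\Phi(v)) = R(u) = x$, so $x \in \varphi(A_k)$. Hence $\varphi(A_k) = A_k$, and in particular $A_k \subset \varphi(A_k)$.

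With this in hand, the hypothesis $A_k \subset Q$ puts us exactly in the situation of Lemma~\ref{lem:subset} with $B = A_k$: we have $A_k \subset Q$ and $A_k \subset \varphi(A_k)$, so the lemma yields $A_k \subset A_Q$, which is the claim.

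The only point that needs a little care — and I expect it to be the main (minor) obstacle — is the backward part of the invariance argument, since $\varphi$ is merely continuous and not a homeomorphism, so "$\varphi(A_k) \supset A_k$" cannot be read off from any inverse map. The resolution is to not work with $\varphi^{-1}$ at all but to pull the surjectivity back to $\Phi$ on $A$: the relation $\Phi(A) = A$ is a genuine set equality, so every $u \in A$ has a $\Phi$-preimage \emph{in $A$}, and pushing that preimage forward by $R$ and applying~(\ref{eq:conjugonA}) produces the required $\varphi$-preimage of $x$ inside $A_k$. (In fact, for the application of Lemma~\ref{lem:subset} one only needs $A_k \subset \varphi(A_k)$, so even the forward inclusion $\varphi(A_k)\subset A_k$ is not strictly necessary, though it is natural to record the full equality.) Everything else is a direct substitution into~(\ref{eq:conjugonA}) and an appeal to the lemma, so no further estimates are involved.
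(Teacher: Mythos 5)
Your proposal is correct and follows essentially the same route as the paper: the paper's proof simply asserts $\varphi(A_k)=A_k$ ``by construction'' and then invokes Lemma~\ref{lem:subset}, exactly as you do. Your write-up merely fills in the details of why $\varphi(A_k)=A_k$ (via the conjugacy~(\ref{eq:conjugonA}) and $\Phi(A)=A$), and correctly notes that only the inclusion $A_k\subset\varphi(A_k)$ is actually needed for the lemma.
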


\begin{proof} By construction of our dynamical system (see
	\eqref{def:phi}--\eqref{eq:ER}),
we have $\varphi(A_k) = A_k$. Thus, Lemma~\ref{lem:subset} implies that
$A_k \subset A_Q$.
\end{proof}

\begin{remark}
Observe that we can in general not expect that  $A_k = A_Q$. In fact,
by construction $A_Q$ may contain several invariant sets and related
heteroclinic connections. In this sense $A_k$ will be embedded in $A_Q$.
\end{remark}

Next observe that the $Q_\ell$'s define a nested sequence of compact sets, that is, $Q_{\ell+1}\subset Q_\ell$.
Therefore, for each $m$,
\begin{equation}
 Q_m = \bigcap\limits_{\ell =1}^{m} Q_\ell,
\end{equation}
and we may view
\begin{equation}
 Q_{\infty} = \bigcap\limits_{\ell =1}^{\infty} Q_\ell
\end{equation}
as the limit of the $Q_\ell$'s.

Now we will prove the convergence of the subdivision scheme
for continuous $\varphi$. More precisely we will show that $Q_\infty = A_Q$.
This will be done in two steps. The first is
\begin{lemma}
	\label{lem:Qinf_in_phiQinf}
	\[
	Q_\infty \subset A_Q.
	\]
\end{lemma}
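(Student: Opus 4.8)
The goal is to show $Q_\infty \subset A_Q = \bigcap_{j\ge 0}\varphi^j(Q)$, so it suffices to prove that $Q_\infty \subset \varphi^j(Q)$ for every $j\ge 0$. Since $Q_0 = Q$ we already have $Q_\infty \subset Q = \varphi^0(Q)$, and the natural route is induction on $j$: assuming $Q_\infty \subset \varphi^{j}(Q)$, I want to conclude $Q_\infty \subset \varphi^{j+1}(Q)$. The mechanism for pushing the inclusion forward one step must come from the \emph{selection} rule \eqref{eq:select}, which is the only place the dynamics $\varphi$ enters the construction of the $\cB_\ell$.

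First I would establish the key one-step property: for every $\ell \ge 1$ one has $Q_\ell \subset \varphi(Q_{\ell-1})$. To see this, take $x \in Q_\ell$; then $x$ lies in some $B \in \cB_\ell \subset \hat\cB_\ell$, and by \eqref{eq:select} there is $\hat B \in \hat\cB_\ell$ with $\varphi^{-1}(B)\cap \hat B \ne \emptyset$. Pick $y$ in that intersection: then $\varphi(y) \in B$ and $y \in \hat B \subset \bigcup_{B'\in\hat\cB_\ell}B' = \bigcup_{B'\in\cB_{\ell-1}}B' = Q_{\ell-1}$ by \eqref{eq:sd1}. So $B \subset \varphi(B'')$ for\ldots — actually more carefully: this only gives that $B$ meets $\varphi(Q_{\ell-1})$, not that $B \subset \varphi(Q_{\ell-1})$. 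To upgrade this I use that $\diam(\cB_\ell)\to 0$: every point of $B$ is within $\diam(B) \le \diam(\cB_\ell)$ of the point $\varphi(y) \in \varphi(Q_{\ell-1})$. Hence $Q_\ell$ lies in the $\diam(\cB_\ell)$-neighborhood of $\varphi(Q_{\ell-1})$.

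Now I combine this with the nestedness $Q_{\ell+1}\subset Q_\ell$ and compactness. Fix $j$ and suppose inductively $Q_\infty \subset \varphi^j(Q)$; I actually prove the slightly stronger statement $Q_\infty \subset \varphi^j(Q_{\ell})$ for all $\ell$, which is what is really needed. Let $x \in Q_\infty$. For each $\ell$, since $x \in Q_\ell$, the neighborhood estimate gives a point $z_\ell \in \varphi^j(Q_{\ell-1})$ — iterating the one-step property $j$ times and using that $\varphi$ (hence $\varphi^j$) is uniformly continuous on the compact set $Q$, so the image of a shrinking neighborhood still shrinks — with $\|x - z_\ell\| \to 0$ as $\ell \to \infty$. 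The sets $\varphi^j(Q_{\ell})$ are compact (continuous image of compact) and nested decreasing in $\ell$, so their intersection $\bigcap_\ell \varphi^j(Q_\ell) = \varphi^j(Q_\infty)$ is compact, and a standard argument (pass to a convergent subsequence of $z_\ell$, whose limit must lie in every $\varphi^j(Q_{\ell})$) shows $x \in \bigcap_\ell \varphi^j(Q_\ell) \subset \varphi^j(Q)$. Since $j$ was arbitrary, $x \in A_Q$.

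The main obstacle I anticipate is the interchange $\bigcap_\ell \varphi^j(Q_\ell) = \varphi^j\bigl(\bigcap_\ell Q_\ell\bigr)$: for continuous (non-injective) $\varphi$ this requires compactness of the $Q_\ell$ together with the nested/decreasing structure, and one must be careful that $\varphi^j$ of a decreasing intersection of compacts equals the intersection of the images — this is true for continuous maps on compact Hausdorff spaces, but it is exactly the spot where the homeomorphism hypothesis of \cite{DH97} was previously being used implicitly, so it deserves an explicit argument. Equivalently, one can avoid the interchange altogether by running the subsequence-extraction argument directly on the $z_\ell$, which I would prefer for cleanliness; the quantitative ingredient that makes everything go through is that $\diam(\cB_\ell)\le \theta_{\max}^{\ell}\diam(\cB_0)\to 0$ combined with uniform continuity of $\varphi$ on $Q$.
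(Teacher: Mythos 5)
Your argument is correct and rests on the same three ingredients as the paper's proof --- the selection step \eqref{eq:select} producing a point of $Q_{\ell-1}$ whose image lies in the box containing $x$, the shrinking diameters $\diam(\cB_\ell)\to 0$, and compactness plus continuity to pass to the limit --- but you package it differently, and the difference matters for how much work you end up doing. The paper applies this extraction argument exactly once, to prove the single inclusion $Q_\infty\subset\varphi(Q_\infty)$, and then concludes immediately from Lemma~\ref{lem:subset} (any $B\subset Q$ with $B\subset\varphi(B)$ satisfies $B\subset A_Q$). You instead prove $Q_\infty\subset\varphi^j(Q)$ for each $j$ separately by iterating a one-step neighborhood estimate $j$ times, which is what forces you into the uniform-continuity bookkeeping (iterated moduli of continuity on $Q\cup\varphi(Q)\cup\dots\cup\varphi^j(Q)$) and into worrying about interchanging $\varphi^j$ with the nested intersection. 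Two remarks on your version: first, the interchange you flag as the main obstacle is not actually needed --- since each $\varphi^j(Q_m)$ is compact, hence closed, and your approximants $z_\ell\in\varphi^j(Q_{\ell-j})$ converge to $x$, you get $x\in\varphi^j(Q_m)$ for every $m$ directly, and $\bigcap_m\varphi^j(Q_m)\subset\varphi^j(Q)$ is trivial; second, all of this machinery evaporates if you notice that your $j=1$ case (done via subsequence extraction on the preimage points, exactly as in the paper) already yields $Q_\infty\subset\varphi(Q_\infty)$, from which $Q_\infty\subset\varphi^j(Q_\infty)\subset\varphi^j(Q)$ for all $j$ follows by monotonicity --- which is precisely the content of Lemma~\ref{lem:subset}. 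So your route is valid but re-derives that lemma by hand in its most laborious form; factoring through it first is what keeps the paper's proof to a few lines.
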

\begin{proof}
	We will show that
	\[
	Q_\infty \subset \varphi(Q_\infty).
	\]
	Then the result follows with Lemma \ref{lem:subset}.
	
	Let $y\in Q_\infty$. Then for every $\ell \ge 0$ there is a unique
	$B_\ell(y)\in \cB_\ell$ with $y\in B_\ell(y)$. By the selection step of
	the subdivision scheme (see (\ref{eq:select})),
	there is $z_\ell\in Q_\ell$ with $\varphi(z_\ell) \in B_\ell(y)$. Choosing a
	convergent subsequence of $(z_\ell)$, if necessary, we may assume that
	$z = \lim_{\ell\to\infty}z_\ell$. By construction, $z\in Q_\infty$, and since
	$\lim_{\ell\to\infty}\diam(B_\ell(y)) = 0$ we conclude that $\lim_{\ell\to\infty} \varphi(z_\ell) = y$.
	Finally $\varphi$ is continuous, and therefore $y = \varphi(z)$. Hence $y\in
	\varphi(Q_\infty)$.
\end{proof}

For the inverse inclusion, we need to introduce an additional assumption, namely that
$\varphi^{-1}(A_Q) \subset A_Q$. This is automatically satisfied in the case where
$\varphi$ is a homeomorphism. Moreover if
$A_k$ is attracting and $A_k=A_Q$ then $A_Q$ is backward invariant.
These observations justify this assumption.

\begin{lemma}
	\label{lem:A_QinQ_inf}
	Suppose that $\varphi^{-1}(A_Q) \subset A_Q$, then
	\[
	A_Q \subset Q_\infty.
	\] 
\end{lemma}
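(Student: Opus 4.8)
The plan is to show $A_Q \subset Q_\ell$ for every $\ell$, which immediately gives $A_Q \subset Q_\infty = \bigcap_{\ell\ge 1} Q_\ell$. I will argue by induction on $\ell$. For the base case, recall that $\cB_0$ was chosen with $Q_0 = Q$, and $A_Q = \bigcap_{j\ge 0}\varphi^j(Q) \subset \varphi^0(Q) = Q = Q_0$, so $A_Q \subset Q_0$. For the inductive step, assume $A_Q \subset Q_{\ell-1}$. Since the subdivision step satisfies \eqref{eq:sd1}, we have $\bigcup_{B\in\hat\cB_\ell} B = \bigcup_{B\in\cB_{\ell-1}} B = Q_{\ell-1} \supset A_Q$, so every point of $A_Q$ lies in some box of $\hat\cB_\ell$. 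The crux is then to show that every box $B\in\hat\cB_\ell$ that meets $A_Q$ survives the selection step \eqref{eq:select}, i.e.\ that there is some $\hat B\in\hat\cB_\ell$ with $\varphi^{-1}(B)\cap\hat B\ne\emptyset$.

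To see this, fix $B\in\hat\cB_\ell$ with $B\cap A_Q\ne\emptyset$ and pick $y\in B\cap A_Q$. Here is where the extra hypothesis $\varphi^{-1}(A_Q)\subset A_Q$ enters. First I need a preimage of $y$ inside $A_Q$: since $A_Q = \varphi(A_Q)$ (which follows because $A_Q\subset\varphi(A_Q)$ is the content used via Lemma~\ref{lem:subset}, together with $\varphi(A_Q)\subset A_Q$ — the latter being exactly the new assumption applied appropriately, or already known from $A_Q$ being the relative attractor), there is $z\in A_Q$ with $\varphi(z) = y$. Then $z \in \varphi^{-1}(B)$ because $\varphi(z) = y \in B$. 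Moreover, by the inductive hypothesis $A_Q \subset Q_{\ell-1} = \bigcup_{\hat B\in\hat\cB_\ell}\hat B$, so $z$ lies in some $\hat B\in\hat\cB_\ell$. Hence $z\in\varphi^{-1}(B)\cap\hat B\ne\emptyset$, so $B$ passes the selection criterion and $B\in\cB_\ell$. Therefore $y\in B\subset Q_\ell$, and since $y\in A_Q$ was arbitrary, $A_Q\subset Q_\ell$, completing the induction.

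The main obstacle is making the invariance bookkeeping precise: one must be careful about whether $A_Q = \varphi(A_Q)$ holds, and in what generality. For a general compact $Q$ and continuous $\varphi$ one always has $\varphi(A_Q)\subset A_Q$, and under the standing assumption $\varphi^{-1}(A_Q)\subset A_Q$ one obtains the reverse inclusion: if $y\in A_Q$ then $y\in\varphi^{j+1}(Q)$ for all $j\ge 0$, so there are $w_j\in\varphi^j(Q)$ with $\varphi(w_j) = y$; any such $w_j$ lies in $\varphi^{-1}(\{y\})\subset\varphi^{-1}(A_Q)\subset A_Q$, giving $y\in\varphi(A_Q)$. Thus every point of $A_Q$ does have a preimage in $A_Q$, which is exactly what the argument above needs. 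Apart from this point the proof is a routine induction; no compactness or limiting argument beyond what is already in Lemma~\ref{lem:Qinf_in_phiQinf} is required here.
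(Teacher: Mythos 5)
Your proof is correct and is essentially the argument of Lemma 3.2 in \cite{DH97}, which is exactly what the paper invokes without restating: induct on $\ell$, use the hypothesis $\varphi^{-1}(A_Q)\subset A_Q$ to produce for each $y\in A_Q$ a preimage $z\in A_Q\subset Q_{\ell-1}$, and observe that $z$ witnesses survival of the box containing $y$ in the selection step \eqref{eq:select}. One small caveat: your parenthetical claim that $\varphi(A_Q)\subset A_Q$ holds for arbitrary continuous $\varphi$ and compact $Q$ is false (e.g.\ $\varphi(x)=2x$, $Q=[0,1]$ gives $A_Q=Q$ but $\varphi(A_Q)=[0,2]$), yet this is harmless since the only inclusion your argument actually uses is $A_Q\subset\varphi(A_Q)$, which you derive correctly from the stated hypothesis.
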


This proof is identical to the proof of Lemma 3.2 in \cite{DH97}. Thus, we will not
restate it here.

We summarize the convergence result in the following
\begin{proposition}
Suppose that the relative global attractor $A_Q$ satisfies
$\varphi^{-1}(A_Q) \subset A_Q$. Then
\[
A_Q = Q_\infty.
\]
\end{proposition}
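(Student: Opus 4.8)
The plan is to combine the two inclusions established above. Since Lemma~\ref{lem:Qinf_in_phiQinf} already gives $Q_\infty \subset A_Q$ unconditionally, and Lemma~\ref{lem:A_QinQ_inf} gives $A_Q \subset Q_\infty$ under the standing hypothesis $\varphi^{-1}(A_Q)\subset A_Q$, the two together immediately yield $A_Q = Q_\infty$. So the proof is a one-line deduction: \emph{``By Lemma~\ref{lem:Qinf_in_phiQinf} we have $Q_\infty\subset A_Q$, and by Lemma~\ref{lem:A_QinQ_inf} the assumption $\varphi^{-1}(A_Q)\subset A_Q$ gives $A_Q\subset Q_\infty$; hence $A_Q=Q_\infty$.''}

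The only genuine mathematical content sits inside the two lemmas, not the proposition. In Lemma~\ref{lem:Qinf_in_phiQinf} the idea is to show $Q_\infty\subset\varphi(Q_\infty)$ and then invoke Lemma~\ref{lem:subset}; the mechanism is that each point $y\in Q_\infty$ lies in a nested sequence of selected boxes $B_\ell(y)$, the selection rule produces witnesses $z_\ell\in Q_\ell$ with $\varphi(z_\ell)\in B_\ell(y)$, a subsequence of the $z_\ell$ converges to some $z\in Q_\infty$, diameters of the $B_\ell(y)$ shrink to zero so $\varphi(z_\ell)\to y$, and continuity of $\varphi$ closes the loop with $y=\varphi(z)$. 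For Lemma~\ref{lem:A_QinQ_inf} the paper defers to the identical argument in \cite{DH97}, where the backward-invariance assumption $\varphi^{-1}(A_Q)\subset A_Q$ is exactly what is needed to show, by induction on $\ell$, that no box meeting $A_Q$ is ever discarded in the selection step.

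If I were writing the proposition's proof from scratch I would therefore not attempt anything clever — I would simply cite the two lemmas. The main subtlety to flag is purely conceptual rather than technical: one must be careful that the backward-invariance hypothesis $\varphi^{-1}(A_Q)\subset A_Q$ is not automatic for merely continuous (non-invertible) $\varphi$, which is precisely why it appears as an explicit assumption here whereas in \cite{DH97} it came for free from $\varphi$ being a homeomorphism. The remark preceding Lemma~\ref{lem:A_QinQ_inf} already anticipates this by noting that if $A_k$ is attracting and $A_k = A_Q$ then $A_Q$ is backward invariant, so the hypothesis is natural in the situations of interest. No hard obstacle remains at the level of the proposition itself; the real work has been localized into the lemmas, and the proposition is just their conjunction.

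\begin{proof}
By Lemma~\ref{lem:Qinf_in_phiQinf}, $Q_\infty\subset A_Q$. Conversely, since $A_Q$ is assumed to satisfy $\varphi^{-1}(A_Q)\subset A_Q$, Lemma~\ref{lem:A_QinQ_inf} gives $A_Q\subset Q_\infty$. Combining both inclusions yields $A_Q=Q_\infty$.
\end{proof}
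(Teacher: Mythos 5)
Your proof is correct and is exactly what the paper intends: the proposition is stated as a summary of Lemma~\ref{lem:Qinf_in_phiQinf} and Lemma~\ref{lem:A_QinQ_inf}, and the paper itself gives no further argument beyond their conjunction. Your one-line deduction, together with the correct observation that the backward-invariance hypothesis is needed only for the second inclusion, matches the paper's approach.
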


\subsection{Approximation of Attracting Sets}
\label{ssec:approx_att}
Observe that by construction of $\varphi$, the set $A_Q$ defined in
Section \ref{ssec:subdiv} \emph{contains} the one-to-one image $A_k$ of
the invariant set $A$ of $\Phi$.  We now show that by using sufficiently
high powers of $\Phi$ we can actually \emph{approximate} a one-to-one
image of $A$ if $A$ is attracting. 

Therefore,
we now assume that $A$ is an attracting set, that is, $A$ attracts
all bounded sets within a neighborhood $U$ of $A$.
Moreover we assume that the set $Q$ is chosen in such a way that
\begin{equation}\label{eq:EQU}
A_k \subset Q \quad \mbox{and} \quad E(Q)\subset U.
\end{equation}
Hence, for every $x\in Q$, $\Phi^j(E(x))$ will eventually approach the
attracting set $A$ for $j\to \infty$. However, this alone does not guarantee
that $A_k$ is also an attracting set for the dynamical system $\varphi$.
For instance, it may be the case that for a certain $\bar x \in Q$ one has a
``spurious fixed point'' in the sense that
\[
\bar x = \varphi (\bar x)
\]
although $\Phi(E(\bar x))\not= E(\bar x)$ may be closer to $A$ than $E(\bar x)$.

In order to overcome this problem we now define for $m\ge 1$ the continuous maps
\begin{equation}\label{eq:phij}
\varphi_m = R\circ \Phi^m \circ E
\end{equation}
and denote the corresponding relative global attractors  by
$A^m_Q$.
\begin{remark}
\label{rmk:Phim}
Observe that $A$ is an invariant set for $\Phi^m$ for every $m$
and therefore we can still use $R$ as the restriction in our construction of the
dynamical system $\varphi^m$.
\end{remark}

\begin{lemma}
$A_k \subset A^m_Q$ for all $m\ge 1$.
\end{lemma}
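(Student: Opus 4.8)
The plan is to mimic the proof strategy already used for the analogous statement with $\varphi = \varphi_1$, namely Lemma~\ref{lem:subset} together with the fixed-point property of $A_k$. First I would recall from Remark~\ref{rmk:Phim} that $A$ is invariant under $\Phi^m$, i.e. $\Phi^m(A)=A$, and that $R=D_k[f,\Phi]$ (the \emph{same} restriction as before) is one-to-one on $A$ with $A_k=R(A)$. The conditions \eqref{eq:condRE} still hold, since they only involve the pair $(R,E)$ on $A$ and $A_k$ and not the choice of iterate: $(E\circ R)(u)=u$ for all $u\in A$ and $(R\circ E)(x)=x$ for all $x\in A_k$.

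From this it follows that $\varphi_m(A_k)=A_k$. Indeed, for $x\in A_k$ write $x=R(u)$ with $u\in A$; then
\[
\varphi_m(x) = R\bigl(\Phi^m(E(R(u)))\bigr) = R\bigl(\Phi^m(u)\bigr),
\]
using $E(R(u))=u$ from \eqref{eq:condRE}. Since $\Phi^m(A)=A$, as $u$ ranges over $A$ the point $\Phi^m(u)$ ranges over all of $A$, and hence $R(\Phi^m(u))$ ranges over all of $A_k=R(A)$. Therefore $\varphi_m(A_k)=A_k$; in particular $A_k\subset\varphi_m(A_k)$.

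Now I would invoke Lemma~\ref{lem:subset}, applied to the dynamical system generated by $\varphi_m$ in place of $\varphi$ (the lemma's proof only uses continuity of the map and the definition of the relative global attractor $A_Q^m=\bigcap_{j\ge 0}\varphi_m^j(Q)$, both of which hold here since $\varphi_m$ is continuous by \eqref{eq:phij}). Taking $B=A_k$, the hypotheses $A_k\subset Q$ (from \eqref{eq:EQU}) and $A_k\subset\varphi_m(A_k)$ are satisfied, so Lemma~\ref{lem:subset} yields $A_k\subset A_Q^m$. Since $m\ge 1$ was arbitrary, this proves the claim.

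I do not anticipate a serious obstacle here; the statement is essentially a bookkeeping observation. The one point that deserves care — and where a referee might look — is verifying that the pair $(R,E)$ constructed in Section~\ref{ssec:findim_emb_att} for $\varphi=\varphi_1$ can genuinely be reused verbatim for $\varphi_m$; this is exactly the content of Remark~\ref{rmk:Phim}, and the key fact is that $\tilde E=\tilde R^{-1}$ depends only on $R|_A$, not on any iterate of $\Phi$, so the Dugundji extension $E$ is unchanged. Once that is granted, the argument above is immediate.
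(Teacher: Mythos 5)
Your proof is correct and follows exactly the paper's argument: establish $\varphi_m(A_k)=A_k$ from the invariance of $A$ under $\Phi^m$ together with \eqref{eq:condRE}, then apply Lemma~\ref{lem:subset} (with $\varphi_m$ in place of $\varphi$) using $A_k\subset Q$ from \eqref{eq:EQU}. The paper's version is just a terser statement of the same two steps, so your additional detail is merely an expansion, not a different route.
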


\begin{proof}
Since $\Phi(A) = A$ we have $\varphi_m(A_k) = A_k$ for $m\ge 1$.
Moreover $A_k \subset Q$ (see (\ref{eq:EQU})), and Lemma~\ref{lem:subset} implies that
$A_k \subset A^m_Q$.
\end{proof}

Define
\[
A^\infty_Q = \bigcap_{m\ge 1} A^m_Q.
\]

Obviously $A_k \subset A^\infty_Q$. Moreover we have

\begin{proposition}\label{prop:attracting}
$A_k = A^\infty_Q$.
\end{proposition}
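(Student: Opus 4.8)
The plan is to establish the nontrivial inclusion $A^\infty_Q \subset A_k$, since $A_k \subset A^\infty_Q$ is already noted. The key idea is that a point $y$ lying in every $A^m_Q$ must, under the lifted dynamics, be shadowed arbitrarily well by genuine orbit fragments of $\Phi$ that stay in the neighborhood $U$ of $A$; since $A$ attracts everything in $U$, those fragments must accumulate on $A$, and the one-to-one correspondence $R\colon A \to A_k$ then forces $y$ to lie in $A_k$.

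First I would unwind the definition: if $y \in A^m_Q = \bigcap_{j\ge 0}\varphi_m^{\,j}(Q)$, then for each $j$ there is $x_j^{(m)} \in Q$ with $\varphi_m^{\,j}(x_j^{(m)}) = y$, i.e. writing $\varphi_m = R\circ\Phi^m\circ E$ and using $E(Q)\subset U$, one obtains a point $w = E(x_j^{(m)}) \in U$ with $R(\Phi^{mj}(w)) = $ (the $j$-th iterate), and in particular, taking $j=1$, there is $w_m \in E(Q) \subset U$ with $R(\Phi^m(w_m)) = y$ for every $m \ge 1$. Now since $A$ attracts bounded subsets of $U$ and $\overline{E(Q)}$ is compact (as $E$ is continuous and $Q$ compact) and contained in $U$, the iterates $\Phi^m(w_m)$ satisfy $\mathrm{dist}(\Phi^m(w_m), A) \to 0$ as $m\to\infty$: this is where I would need to be slightly careful, because the base point $w_m$ changes with $m$, so I would invoke uniform attraction on the bounded set $\overline{E(Q)}$ — i.e. for every $\delta>0$ there is $M$ with $\Phi^n(\overline{E(Q)}) \subset B_\delta(A)$ for all $n\ge M$ — which is exactly what ``attracts all bounded sets'' delivers. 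Hence $\Phi^m(w_m) \to A$.

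Next I would pass to a subsequence: by compactness of $\overline{B_\delta(A)}$ for small $\delta$ (or directly of $A$ together with the convergence just shown), choose $m_i\to\infty$ with $\Phi^{m_i}(w_{m_i}) \to a$ for some $a \in A$. Since $R$ is continuous, $y = R(\Phi^{m_i}(w_{m_i})) \to R(a) \in R(A) = A_k$; but the left-hand side is the constant $y$, so $y = R(a) \in A_k$. This proves $A^\infty_Q \subset A_k$, and combined with the reverse inclusion already recorded, $A_k = A^\infty_Q$.

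I expect the main obstacle to be making the attraction step rigorous with a moving base point. One must not merely say ``$\Phi^m(E(x)) \to A$ for each fixed $x$'' — that gives no control as $m$ and the point vary together — but rather use that $A$ attracts the \emph{bounded set} $E(Q)$ uniformly, so that $\sup_{w \in E(Q)} \mathrm{dist}(\Phi^n(w), A) \to 0$. Given the standing assumption in this subsection that $A$ is attracting in the sense of attracting all bounded sets within $U$, together with $E(Q) \subset U$, this uniform statement is available, and the rest is a routine compactness-and-continuity argument. A minor secondary point is that one should confirm $\varphi_m$ is well-defined and continuous for all $m$, which follows since $\Phi^m$ is Lipschitz (hence continuous) and $R$, $E$ are continuous, as already used in defining $\varphi_m$ in \eqref{eq:phij}.
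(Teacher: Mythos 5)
Your proposal is correct and follows essentially the same route as the paper: both exploit that $A^\infty_Q\subset A^m_Q\subset\varphi_m(Q)=R(\Phi^m(E(Q)))$ for every $m$, together with the uniform attraction of the compact set $E(Q)\subset U$ to $A$ and the continuity of $R$. The only difference is presentational — the paper argues by contradiction with an $\epsilon$--$\delta$/Hausdorff-distance estimate, while you argue directly by extracting a convergent subsequence in the compact set $A$ — and your handling of the moving base point via uniform attraction of the bounded set $E(Q)$ is exactly the point the paper's proof also relies on.
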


\begin{proof}
Suppose that $x\in A^\infty_Q \setminus A_k$. As $A_k$ is compact, this
implies $\mbox{dist}(x,A_k) = \epsilon > 0$.
As $A$ is compact, $R$ is continuous and $A_k = R(A)$, there is $\delta > 0$  such that 
\[
\mbox{dist}(u,A) < \delta \Rightarrow \mbox{dist}(R(u),A_k)) < \frac {\epsilon}{2}.
\]
Set $V = E(Q) \subset U$ by assumption (see (\ref{eq:EQU})).
Since $A$ is attracting and $V$ is a compact set within $U$
we can find an $m\ge 1$ such that
\[
h(\Phi^m(V),A) < \delta,
\]
where $h$ is the Hausdorff distance. By our choice of $\delta$ it follows that
\[
h(R(\Phi^m(V)),A_k) = 
h(\varphi_m(Q),A_k) < \frac {\epsilon}{2}.
\]
Thus,
\[
x\not\in \varphi_m(Q) \Rightarrow x\not\in A^m_Q \Rightarrow x\not\in A^\infty_Q
\]
yielding a contradiction.
\end{proof}

\begin{remark}
Roughly speaking Proposition \ref{prop:attracting} states that it will be
possible to approximate an attracting set for $\Phi$ if we perform the computations with
appropriately high iterates of $\Phi$.
\end{remark}

\section{Numerical Realization for Delay Differential Equations}
\label{sec:num_real_DDE}
As one important setting where finite dimensional dynamical phenomena occur in
infinite dimensional Banach spaces,
we consider delay differential equations with constant time delay $\tau > 0$.
More precisely we consider equations of the form 
 \begin{equation}
	\dot y(t) = g(y(t),y(t-\tau)),
	\label{eq:DDE}
\end{equation}
where $y(t)\in\R^n$ and $g:\R^n \times \R^n \rightarrow \R^n$ is a smooth map. 
Following \cite{HL93}, we denote by $\cC = C([-\tau, 0], \R^n)$ the (infinite dimensional)
state space of the dynamical system  (\ref{eq:DDE}). Equipped
with the maximum norm, $\cC$ is a Banach space.

Let $y_u(t)$ be the trajectory generated by (\ref{eq:DDE}) with the initial condition
$u \in \cC$. Then the flow $\Phi^s: \cC \to \cC$ of (\ref{eq:DDE}) is given by
\[
u \mapsto \Phi^s (u), \mbox{ where $\Phi^s(u)(t) = y_u( s-t)$ for $t\in [-\tau, 0]$.}
\] 
Next we fix $\omega > 0$ and consider the corresponding \emph{time-$\omega$-map} $\Phi^\omega: \cC\to \cC$ 
as our dynamical system. That is, we set
\begin{equation}
\label{eq:PhiY}
\Phi = \Phi^\omega \quad \mbox{and} \quad Y = \cC
\end{equation}
in our abstract dynamical system (\ref{eq:DS}).

In order to numerically realize the construction of the map $\varphi = R\circ
\Phi\circ E$ described in Section~\ref{sec:comp_emb_att}, we have to work on
three tasks: the implementation of $E$, of $R$, and of $\Phi^\omega$ 
respectively. For the latter we will rely on standard methods for forward
time integration of DDEs \cite{bellen2013numerical}. The map $R$ will be realized
on the basis of Theorem~\ref{thm:R05} and Remark~\ref{rmk:ext_R05} by an
appropriate choice of observables. For the numerical construction of the embedding $E$
we will employ a
bootstrapping method that re-uses results of previous computations. This way we will
in particular guarantee that the identities in \eqref{eq:condRE} are at least approximately
satisfied.

From now on we assume that upper
bounds for both the box counting dimension $d$ and the thickness exponent $\sigma$
are available.
This allows us to fix $k > 2(1+\sigma)d$ according to Theorem~\ref{thm:R05}.

\subsection{Numerical Realization of $R$}
\label{ssec:num_real_R}

For the definition of $R$ we have to specify the time span $\omega$ and
appropriate corresponding observables.
In the case of a scalar equation ($n=1$) we choose the observable $f$ to be
\[
f(u) =u(-\tau).
\]
Thus, in this case the restriction $R$ is simply given by
\[
R = D_k[f,\Phi] (u) = (u(-\tau),\Phi(u(-\tau)),\ldots,\Phi^{k-1}(u(-\tau)))^T.
\]
The time span $\omega$ (see (\ref{eq:PhiY})) is defined to be a natural fraction of $\tau$, that is
\begin{equation}
\label{eq:TK}
\omega = \frac{\tau}{K} \mbox{ for } K\in\N.
\end{equation}

\begin{remark}
\label{rmk:K}
\begin{itemize}
\item[(a)] Observe that a natural choice for $K$ in (\ref{eq:TK}) would be $K = k-1$
for $k>1$.
That is, for each evaluation of $R$  the observable would be applied
to a function $u:[-\tau,0] \rightarrow \R$ at $k$ equally distributed time steps within the interval $[-\tau,0]$.
\item[(b)] As described in Section~\ref{ssec:approx_att} (see Remark~\ref{rmk:Phim})
we will frequently replace
$\Phi$ by $\Phi^m$ ($m > 1$) in order to speed up the convergence
towards the invariant sets $A$ resp.\ $A_k$. For an illustration of this procedure
see Figure~\ref{fig:RKm}.
\end{itemize}
\end{remark}

\begin{figure}[htb]
    \centering
        \includegraphics[width=0.84\textwidth]{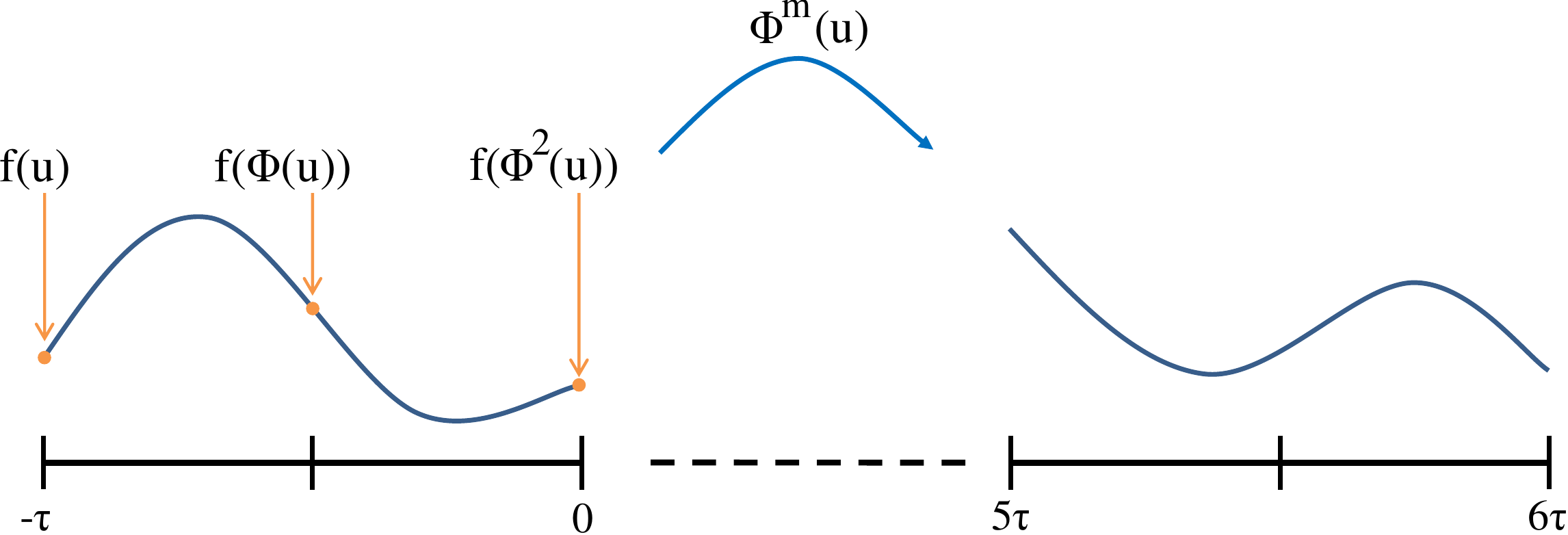} 
    \caption{Numerical realization of the restriction $R$ for $n=1$, $K=2$ and $m=6K$.}
    \label{fig:RKm}
\end{figure}

For the numerical analysis of systems of delay differential equations ($n>1$) we make use of Remark~\ref{rmk:ext_R05}
as follows: For each component $u_j$ of $u$ we define a separate observable $f_j$ $(j=1,\ldots,n)$ by
\begin{equation}
\label{eq:fjdef}
f_j(u) = u_j(\nu_j)\quad \mbox{for a $\nu_j \in [-\tau ,0]$,}
\end{equation}
and choose different time spans
\begin{equation}
\label{eq:defomega}
\omega_j = \frac{\tau}{K_j} \mbox{ for } K_j \in \N
\end{equation}
accordingly.

Finally, we note that also more general constructions for the restriction
$R:\cC\to \R^k$ can be employed. In fact, by virtue of Theorem
\ref{thm:HK99}, for any $k$ that is sufficiently large for the delay
embedding construction, an arbitrary linear map $\cC\to \R^k$ will
generically be one-to-one on $A$. Therefore almost every linear
combination of trajectory points computed during forward integration can
be used for the construction of the map $R$.

\subsection{Numerical Realization of $E$}
\label{ssec:num_real_E}
In the application of the subdivision scheme for the computation of
the relative global attractor $A_k$
described in Section \ref{ssec:subdiv} one has to perform
the selection step 
\[
    \cB_\ell=\left\{B\in\hat\cB_\ell : \exists \hat B\in\hat\cB_\ell
      ~\mbox{such that}~\varphi^{-1}(B)\cap\hat B\ne\emptyset\right\}
\]
(see \eqref{eq:select}). Numerically this is realized as follows: At first
$\varphi$ is evaluated for a large number of
test points $z_j^k\in B_k$ for each box $B_k \in \hat\cB_\ell$. Then
a box $B_m$ is kept in the collection
$\cB_\ell$ if it is hit by (at least) one of the images $\varphi(z_{\tilde j}^{\tilde k})$.

\begin{remark}
In practice the test points $z_j^k\in B_k$ can be chosen according to several different
strategies: In low dimensional problems one can choose them
from a regular grid within each box $B_k$. Alternatively one can
select the test points from the boundaries of the boxes. In our
computations we have sampled a fixed number of test points
from each box at random with respect to a uniform distribution.
\end{remark}

For the evaluation of $\varphi = R\circ \Phi \circ E$ at a test point $z$ we need to 
define the image $E(z)$, that
is, we need to generate adequate initial conditions for the
forward integration of the DDE (\ref{eq:DDE}). 
In the first step of the subdivision procedure, when no information
on $A$
is available, we proceed as follows.
In the case of a scalar delay equation, that is $n=1$, 
we construct a piecewise linear function $u = E(z)$, where
\begin{equation}
 u(t_i) = z_i,
\end{equation}
for $t_i = -\tau + i\cdot \omega,\ i=0,\dots,k-1$.
Observe that by this choice of $E$ and $R$ the condition $R\circ E(z) = z$ is satisfied for
each test point $z$ (see \eqref{eq:condRE} and Remark~\ref{rmk:K} (a)).

For $n>1$ we proceed analogously and distribute the components of $z\in \R^k$
to the components $u_j$ of $u = E(z)\in \R^n$ according to \eqref{eq:fjdef}
and \eqref{eq:defomega}.
Also in this case the condition $R\circ E(z) = z$ still holds.

In the following steps of the subdivision procedure we proceed as follows:
Note that if $B\in \cB_\ell$, then, by the selection step, there must have been a $\hat B\in\cB_{\ell -1}$ 
such that $R(\Phi^\omega(E(\hat z)))\in B$ for at least one test point $\hat z\in \hat B$.
Therefore, we can use the information from the computation of $\Phi^\omega(E(\hat z))$
to construct an appropriate $E(z)$ for each test point $z\in B$.

More concretely, in every step of the subdivision procedure, for every set
$B \in \cB_\ell$ we keep additional information about the points
$\Phi^\omega(E(\hat z))$ that
were mapped into $B$ by $R$ in the previous step. In the simplest case, we store $k_i\geq 1$
additional equally distributed function values for each interval $(-\tau+(i-1) \omega,-\tau+i\omega)$ for $i=1,\dots,k-1$.  
When $\varphi(B)$ is to be computed using test points from $B$, we first use the points in $B$
for which additional information is available and generate the corresponding initial value functions 
via spline interpolation. Note that the more information we store, the smaller the error 
$\|\Phi^\omega(E(\hat z))-E(z)\|$ becomes for $z = R(\Phi^\omega(E(\hat z)))$. That is, we
enforce an approximation of the identity $E\circ R(u) = u$ for all $u\in A$ (see \eqref{eq:condRE}).

If the additional information is available only for a 
few points in $B$, we generate new test points in $B$ at random and construct corresponding trajectories
by piecewise linear interpolation.

\section{Numerical Results}
\label{sec:numex}
In this section we present results of computations carried out for three
different delay differential equations. In each case, $u(t)$ is scalar,
although for the DDE considered in Section \ref{ssec:Arneodo} the
problem is recast into a three-dimensional form in order to obtain a
first-order equation.

\subsection{The Modified Wright Equation}
\label{ssec:Wright}
As the first example, we consider a modification of the Wright equation,
\begin{equation}\label{eq:wright}
\dot u(t) = -\alpha\cdot u(t-1)\cdot [1-u^2(t)].
\end{equation}
In \cite{HL93} it has been shown that the stationary solution $u_0(t) \equiv 0$
undergoes a supercritical Hopf bifurcation at
$\alpha = \pi/2$.  Thus, \eqref{eq:wright} possesses a stable
periodic solution for $\alpha > \pi/2$ -- at least locally.
In our computations we set $\alpha = 2$, choose the embedding
dimension $k=5$, and approximate the relative global attractor $A_Q\subset \R^5$
for $Q = [-2,2]^5$, see Figure~\ref{fig:wright_unstable_manifold}. Here the set $A_Q$
consists of a reconstruction of the two-dimensional unstable manifold of $u_0 \equiv 0$
which accumulates on a stable periodic orbit at its boundary.
\begin{figure}[ht]
\begin{center}
\includegraphics[width = .9\textwidth]{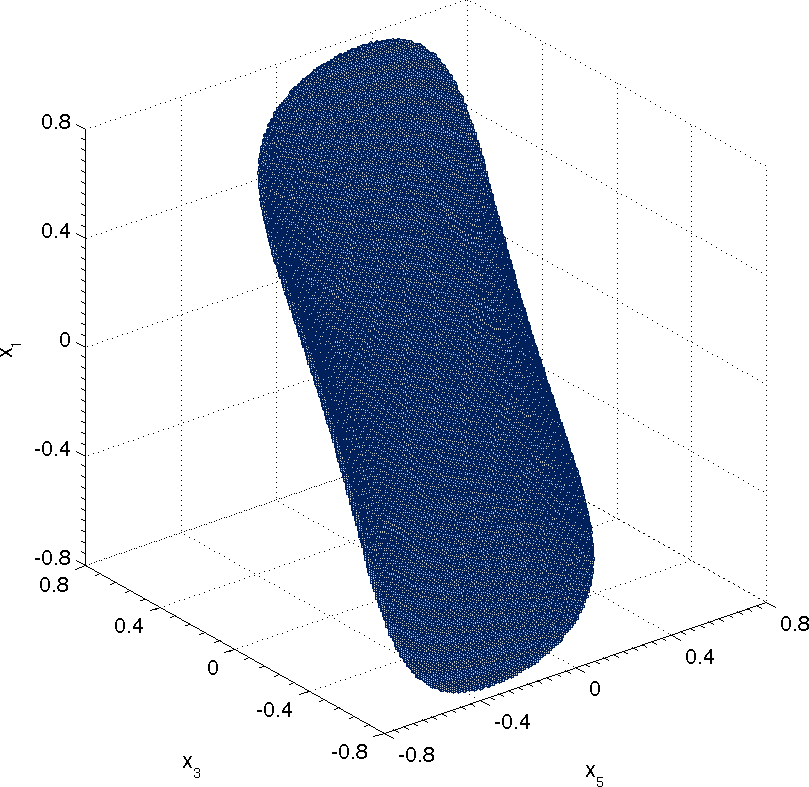}
\end{center}
\caption{Three-dimensional projection of an approximation of the relative global attractor $A_Q$
within $Q = [-2,2]^5$ for equation \eqref{eq:wright} after $\ell = 45$ subdivision steps 
(embedding dimension $k=5$ and iteration exponent
$m=16$, see Section~\ref{ssec:num_real_R})}
\label{fig:wright_unstable_manifold}
\end{figure}

In Figure \ref{fig:wright_subdivision} we show box coverings of the 
reconstructed periodic solution itself. These have been obtained by removing
a small open neighborhood $U$ of the origin from $Q = [-2,2]^5$ and
computing $A_{\widetilde Q}$ for $\widetilde Q = Q \setminus U$.

\begin{figure}[ht]
\begin{minipage}{0.49\textwidth}
 \includegraphics[width = \textwidth]{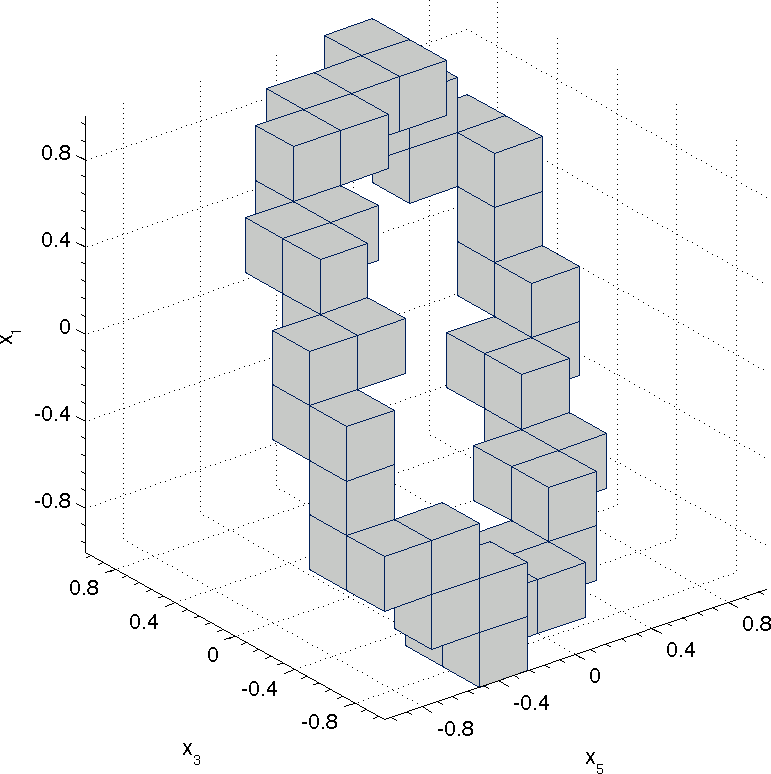}\\
 \centering \scriptsize{(a) $\ell = 20$}
\end{minipage}
\hfill
\begin{minipage}{0.49\textwidth}
\includegraphics[width = \textwidth]{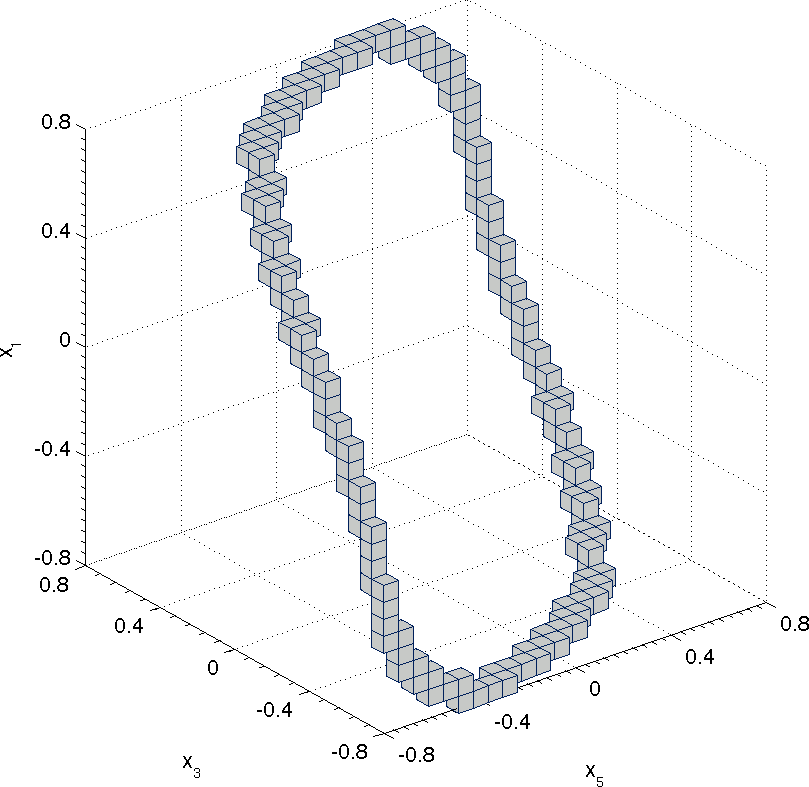}\\
\centering \scriptsize{(b) $\ell = 30$}
\end{minipage}\\[1em]
\begin{minipage}{0.49\textwidth}
 \includegraphics[width = \textwidth]{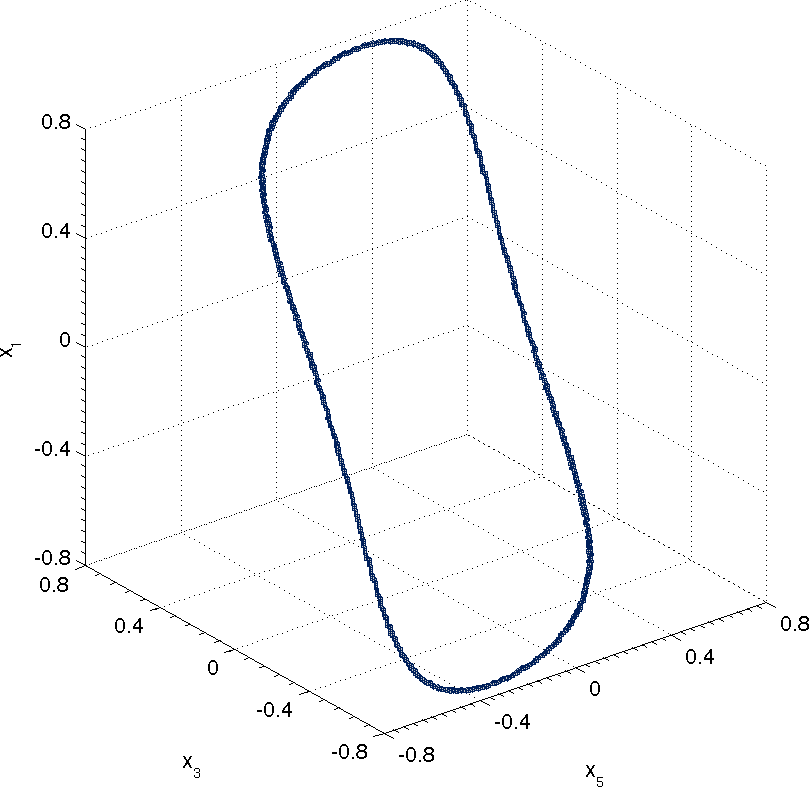}\\
 \centering \scriptsize{(c) $\ell = 45$}
\end{minipage}
\hfill
\begin{minipage}{0.49\textwidth}
\includegraphics[width = \textwidth]{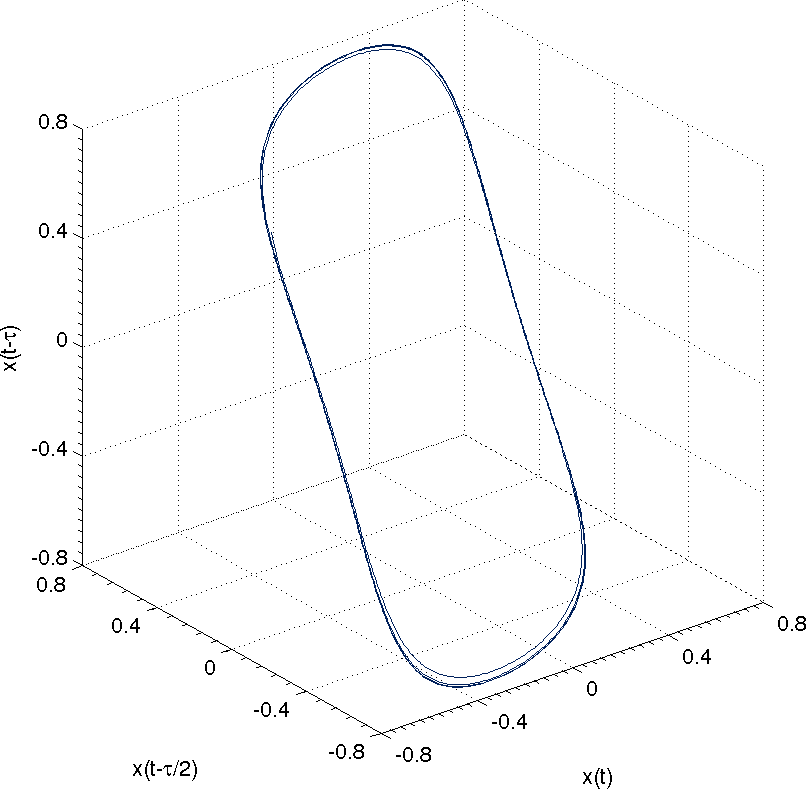}\\
\centering \scriptsize{(d) Simulation}
\end{minipage}
\caption{(a)-(c) Three-dimensional projections of successively finer coverings of the relative global
	attractor $A_{\widetilde Q}$ which corresponds to a reconstruction of a periodic
	orbit of \eqref{eq:wright}. (d)~Periodic orbit computed by direct simulation.}
\label{fig:wright_subdivision}
\end{figure}

\subsection{The Arneodo System with Delay}
\label{ssec:Arneodo}
The second example is a modification of the Arneodo system \cite{Arneodo1982}
where a delay is introduced in the first order derivative of $u$,
\begin{equation*}
 \frac{d^3u}{dt^3}(t) + \frac{d^2u}{dt^2}(t) + 2\frac{du}{dt}(t-\tau) - \alpha u(t) +u^2(t) = 0.
\end{equation*}
This equation has been introduced and analyzed in \cite{SahaiV09}.
In our computations we use the equivalent reformulation as a first-order system
\begin{align}
\nonumber \dot u_1 &= u_2,\\
\dot u_2 &= u_3, \label{eq:DDE_Arneo3D} \\
\nonumber \dot u_3 &= -u_3 - 2u_2(t-\tau)+\alpha u_1 - u_1^2.
\end{align}
The undelayed system (i.\ e.\ (\ref{eq:DDE_Arneo3D}) with $\tau = 0$) has
been studied extensively.
It possesses the equilibria $O_1 =
(0,0,0)$ and $O_2 = (\alpha,0,0)$, the latter is asymptotically stable for $\alpha
< 2$. At $\alpha = 2$ the equilibrium $O_2$ undergoes a supercritical
Hopf bifurcation (cf. \cite{KrOs1999}).
For values of $\alpha$ which are slightly larger than two, points on the two-dimensional
unstable manifold of $O_2$ converge to the corresponding limit cycle on the
branch of periodic solutions. That is, topologically speaking, we have the same situation as
in Figure~\ref{fig:wright_unstable_manifold}.

For the delayed (i.\ e. $\tau>0$) equation, the Hopf bifurcation occurs at
decreasing values of $\alpha$ for increasing values of $\tau$. For fixed
$\alpha = 2.5$, the amplitude of the limit cycle grows with increasing values of
$\tau$ and loses its stability in a period-doubling bifurcation at
$\tau\approx 0.11$ \cite{SahaiV09}. Our purpose is to investigate the
structure of the relative global attractor right after the occurrence of the
period-doubling bifurcation.
Concretely we set $\alpha = 2.5$, $\tau = 0.13$, choose the embedding dimension $k=5$,
and approximate the relative global attractor $A_Q\subset \R^5$
for $Q = [-1, 5] \times [-4, 2] \times [-4, 4] \times [-4, 4] \times [-4, 4]$. This way we compute a reconstruction of the two-dimensional
unstable manifold of the origin which accumulates on a period-doubled limit cycle. 

In this example we have made use of Remark~\ref{rmk:ext_R05} in our
numerical realization. Concretely
we have chosen $\omega = \tau / 2$ and
the following three observables (see \eqref{eq:fjdef} and \eqref{eq:defomega})
\begin{eqnarray*}
f_1(u) & = & u_2(-\tau), \quad k_1 = 3, \quad K_1 = 2,\\
f_2 (u) & = & u_1(0), \quad k_2 = 1,\\
f_3(u) & = & u_3(0), \quad k_3 = 1.
\end{eqnarray*}

Thus, the restriction $R$ can be written as
\[
	R(u) = 
		(u_2(-\tau),u_2(-\tau / 2), u_2(0), u_1(0), u_3(0))^T
\]
Observe that $R:\cC \rightarrow \R^5$ is linear and therefore also Theorem~\ref{thm:HK99}
could be used in order to justify this construction.
The corresponding reconstructions of the relative global attractor are shown in 
Figure \ref{fig:res_arneodo1}.

\begin{figure}[ht]
\begin{minipage}{0.49\textwidth}
 \includegraphics[width = \textwidth]{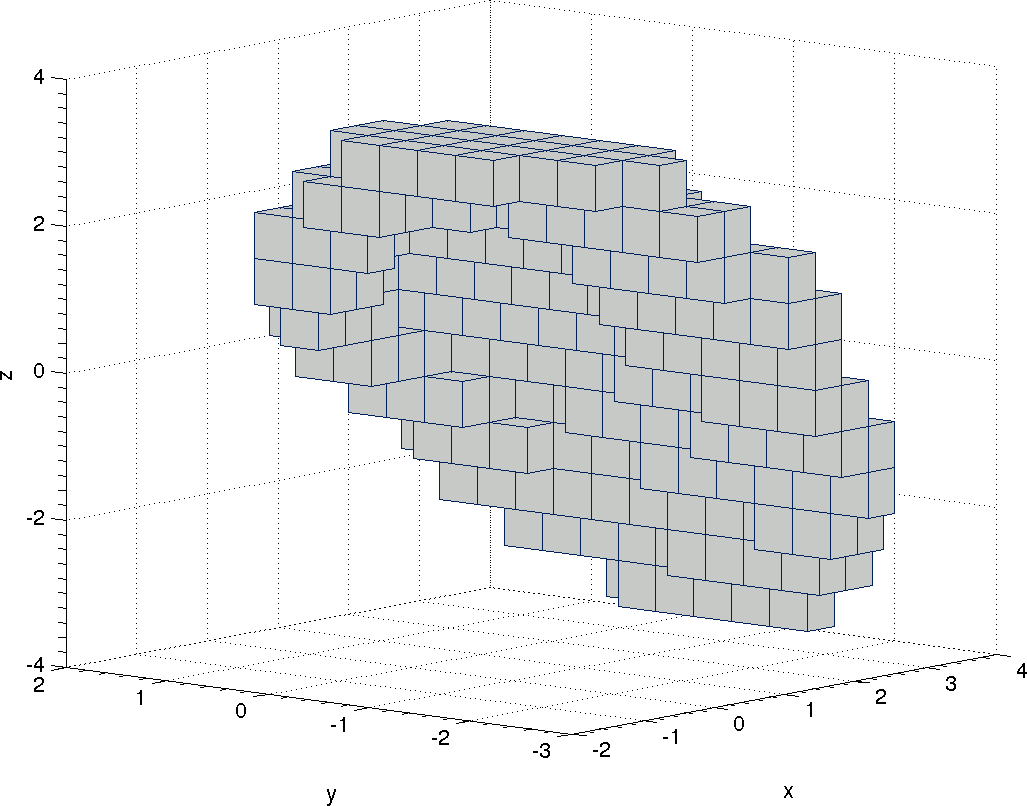}\\
 \centering \scriptsize{(a) $\ell = 20$}
\end{minipage}
\hfill
\begin{minipage}{0.49\textwidth}
\includegraphics[width = \textwidth]{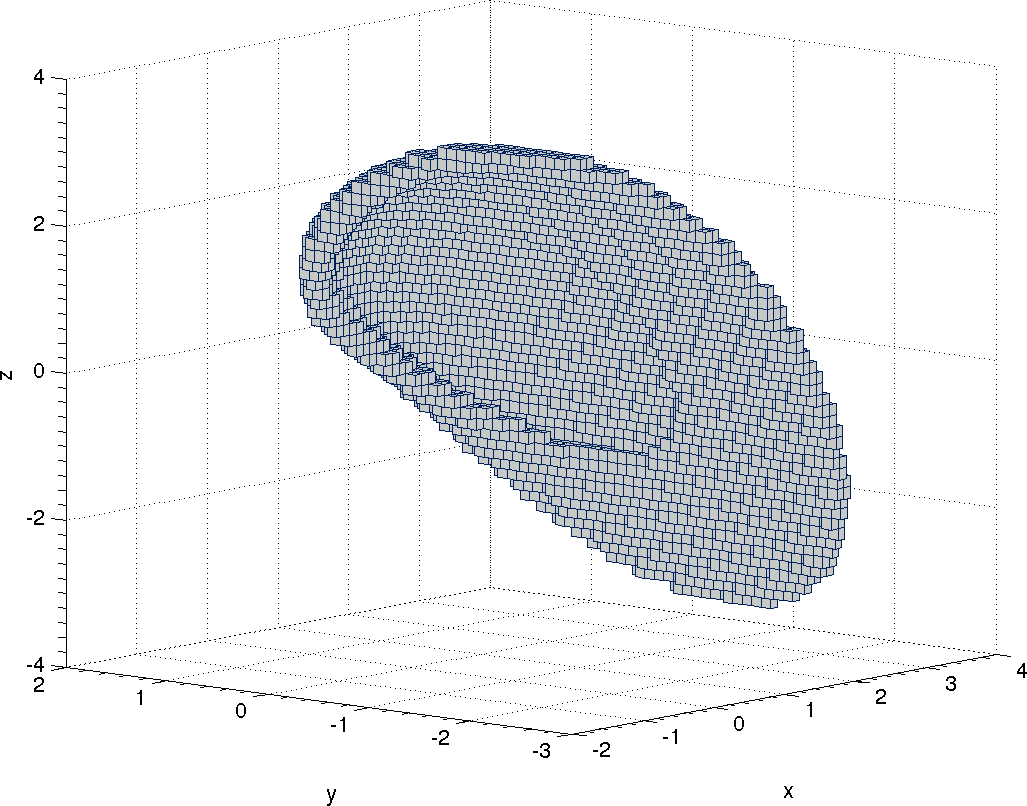}\\
\centering \scriptsize{(b) $\ell = 30$}
\end{minipage}\\[1em]
\begin{minipage}{0.49\textwidth}
 \includegraphics[width = \textwidth]{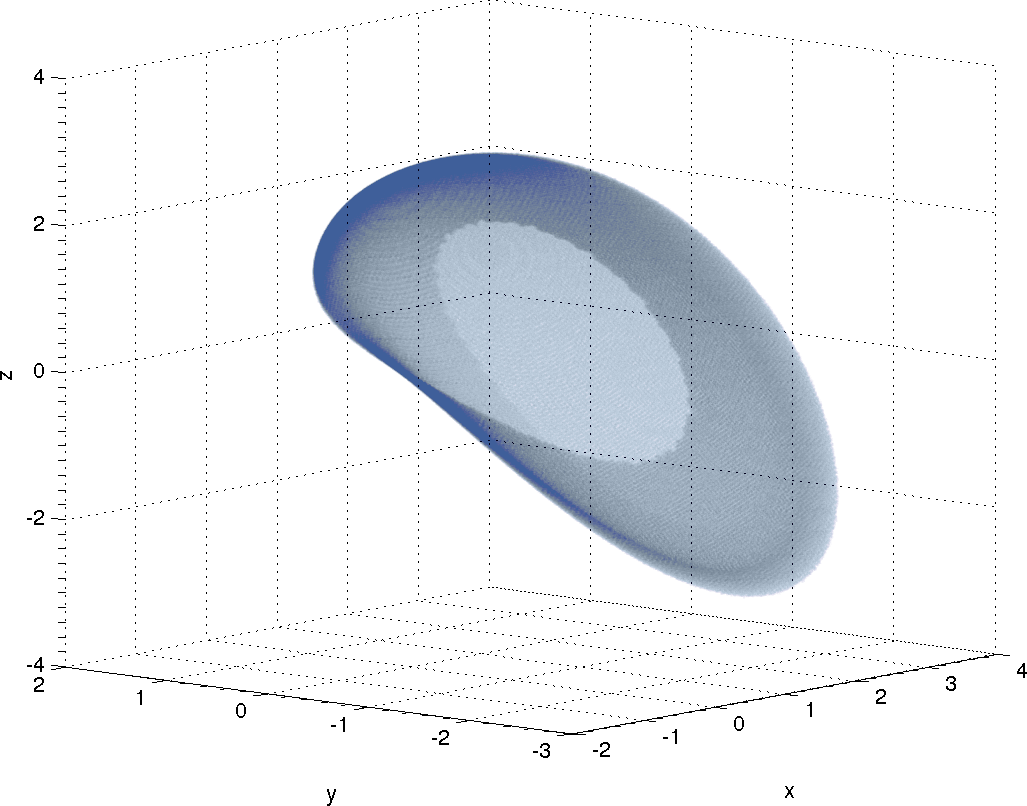}\\
 \centering \scriptsize{(c) $\ell = 45$}
\end{minipage}
\hfill
\begin{minipage}{0.49\textwidth}
\includegraphics[width = \textwidth]{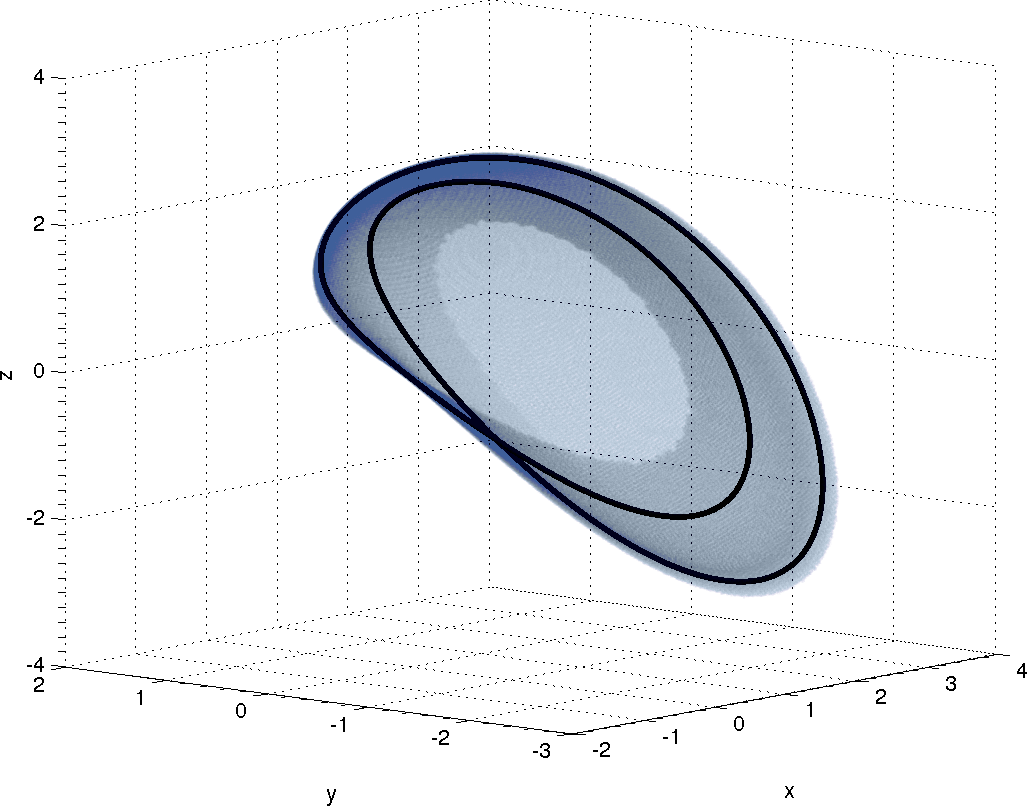}\\
\centering \scriptsize{(d) $\ell = 45$ with the period-doubled orbit computed by direct simulation}
\end{minipage}
\caption{(a)-(d) Successively finer coverings of the relative global attractor for the
        Arneodo DDE (\ref{eq:DDE_Arneo3D}) after $\ell$ subdivision steps
        ($\alpha = 2.5$, $\tau = 0.13$,
        embedding dimension $k=5$ and iteration exponent $m = 15$; see Section~\ref{ssec:num_real_R}).}
\label{fig:res_arneodo1}
\end{figure}

Observe that after the period doubling bifurcation the relative global
attractor contains a Moebius strip with the period-doubled periodic solution at its boundary.
Thus, in the course of the period doubling
bifurcation there has to occur a significant change of the geometry of the
unstable manifold at its boundary so that it can accommodate the
period-doubled solution. In fact, the corresponding mechanism has
been analyzed analytically already in 1984 by Crawford and
Omohundro \cite{Crawford84}. It turns out that at the period doubling 
the unstable manifold starts to wrap itself ``infinitesimally'' around the
unstable periodic solution. In a corresponding Poincar\'e section 
this becomes a spiraling behavior with very sharp curvature, and
we analyze this behavior at the reconstruction in Figure~\ref{fig:res_arneodo2}
(see also Figure~16 in \cite{Crawford84}).
However, we expect that one would have to choose a much higher
resolution (i.\ e.\ higher number of subdivisions) in order to reveal this
dynamical behavior more clearly.

\begin{figure}[ht]
\begin{minipage}{0.49\textwidth}
 \includegraphics[width = \textwidth]{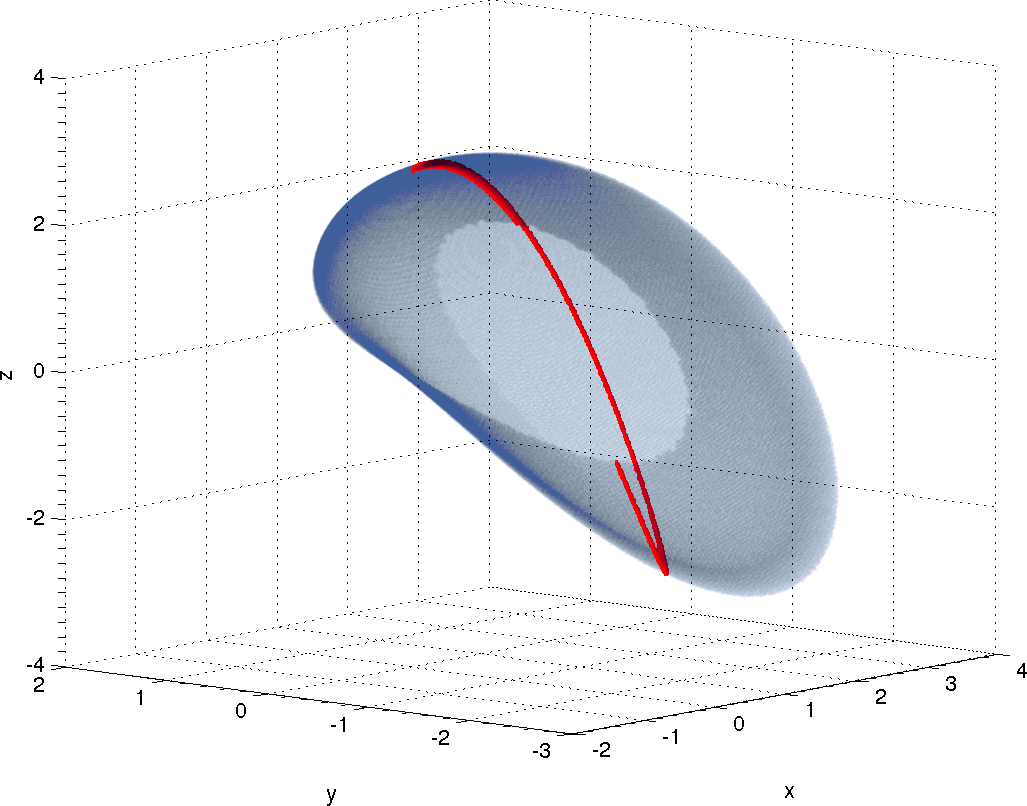}\\
 \centering \scriptsize{(a) Reconstructed unstable manifold.
 Red boxes (at $y=0$) illustrate the location of the Poincar\'e section.}
\end{minipage}
\hfill
\begin{minipage}{0.49\textwidth}
\includegraphics[width = \textwidth]{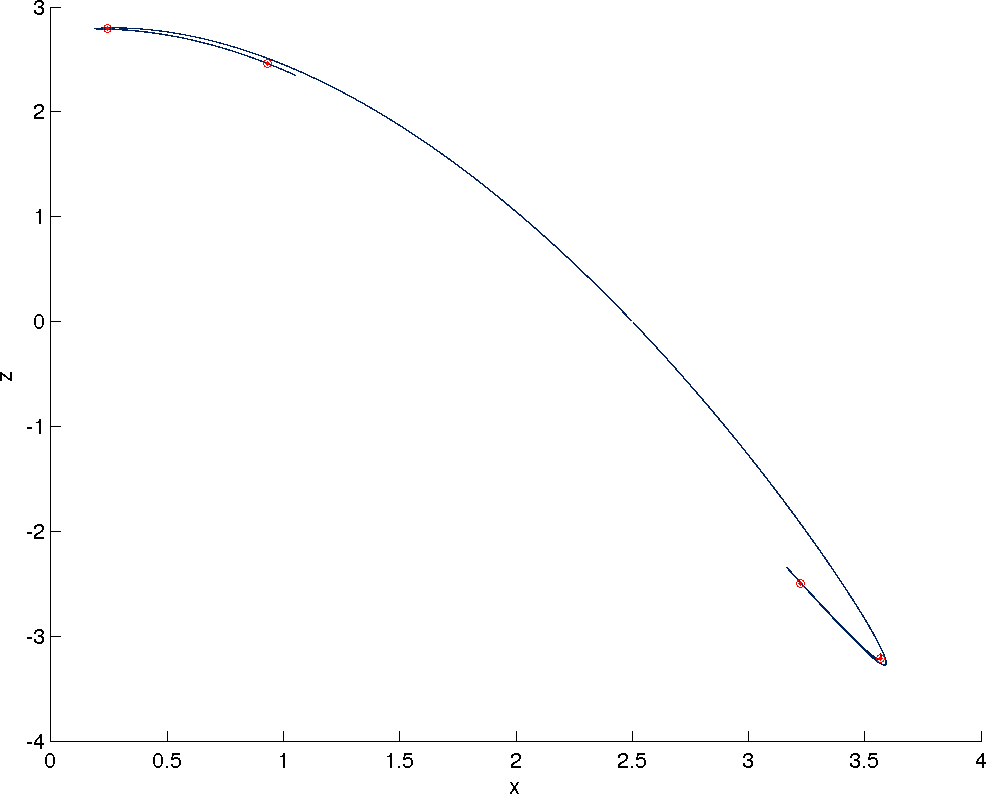}\\
\centering \scriptsize{(b) Poincar\'e section. Red circles mark the intersection
with the period doubled periodic orbit.}
\end{minipage}\\[1em]
\begin{minipage}{0.49\textwidth}
 \includegraphics[width = \textwidth]{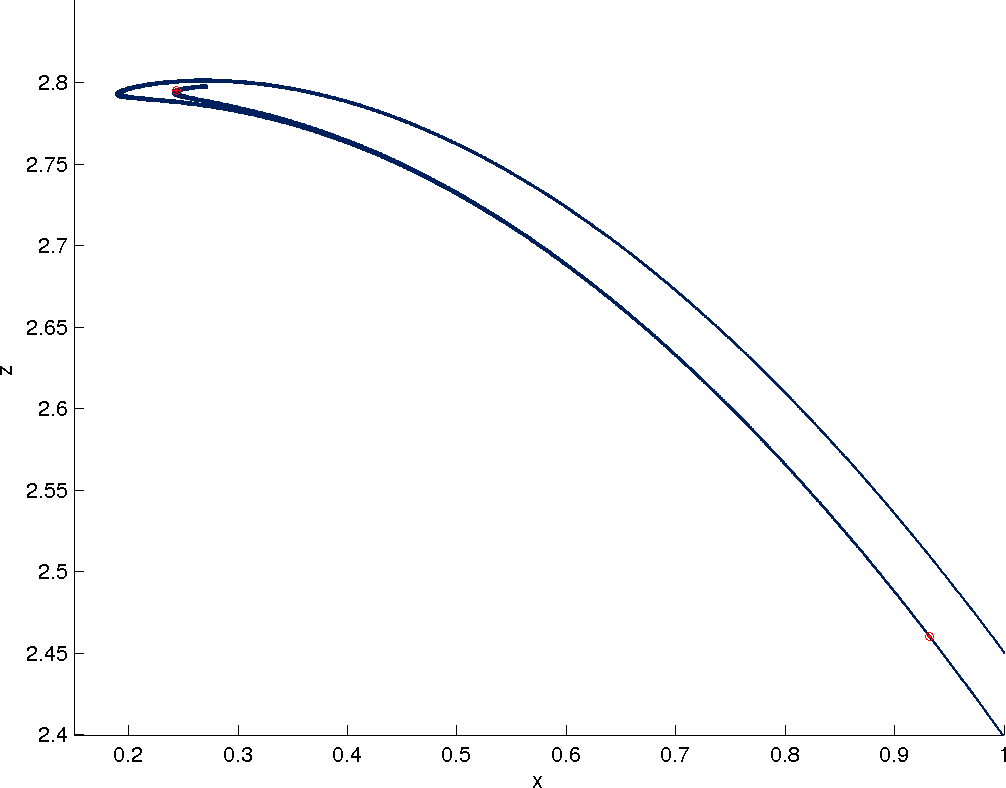}\\
 \centering \scriptsize{(c) Zoom into the upper left corner of (b)}
\end{minipage}
\hfill
\begin{minipage}{0.49\textwidth}
\includegraphics[width = \textwidth]{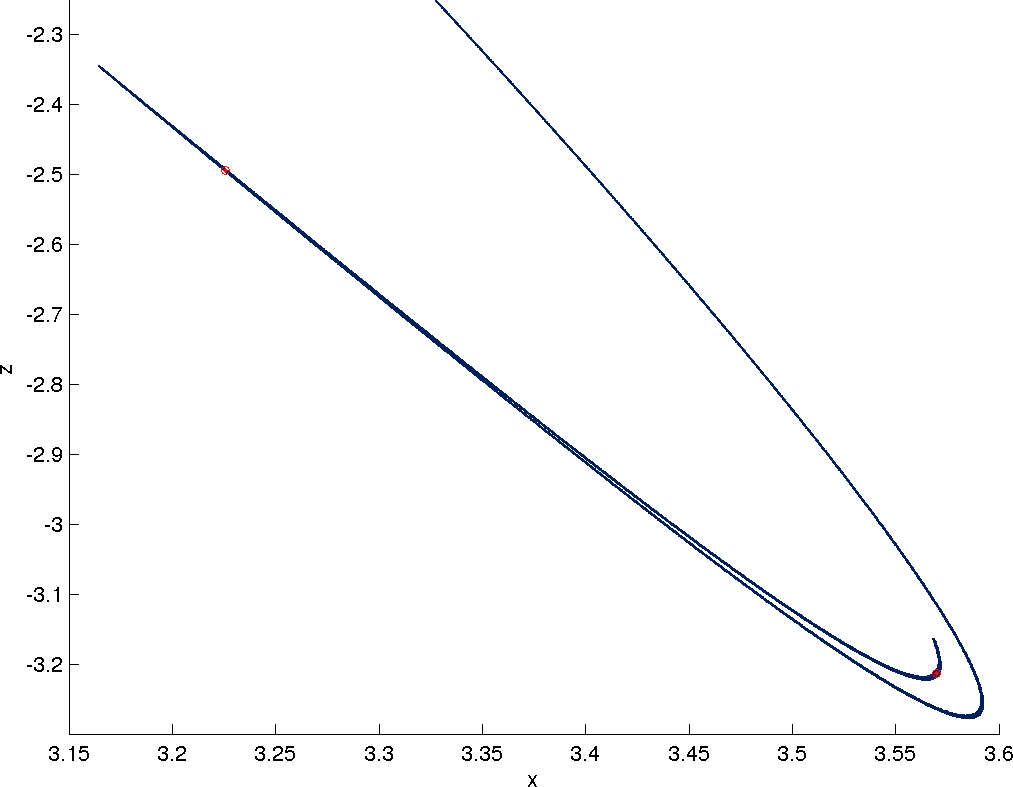}\\
\centering \scriptsize{(d) Zoom into the lower right corner of (b)}
\end{minipage}
\caption{Detailed illustration of a Poincar\'e section through the relative
global attractor for \eqref{eq:DDE_Arneo3D}.}
\label{fig:res_arneodo2}
\end{figure}

\subsection{The Mackey-Glass Equation}
\label{ssec:MG}
Our final example is the well-known delay differential 
equation introduced by Mackey and Glass in 1977 \cite{mackey1977}, namely
\begin{equation}\label{eq:mg}
  \dot u(t) = \beta \frac{u(t-\tau)}{1+u(t-\tau)^\eta}-\gamma u(t),
\end{equation}
where we choose $\beta = 2, \gamma = 1$, $\eta = 9.65$, and $\tau = 2$. This equation is a
model of blood production, where $u(t)$ represents the concentration of blood at
time $t$, $\dot u(t)$ represents production at time $t$ 
and $u(t-\tau)$ is the concentration at an earlier time, when the request for more blood is made.
Direct numerical simulations indicate that the dimension of the corresponding
attracting set is approximately $d=2$. Thus, we choose the embedding dimension
$k=7$, and approximate the relative global attractor $A_Q$ for $Q = [0,1.5]^7 \subset \R^7$. 
In Figure \ref{fig:mg_subdivision} we show projections of the coverings
obtained after $\ell=28,42$ and $63$ subdivision steps as well as a direct simulation.

\begin{figure}[ht]
\begin{minipage}{0.49\textwidth}
 \includegraphics[width = \textwidth]{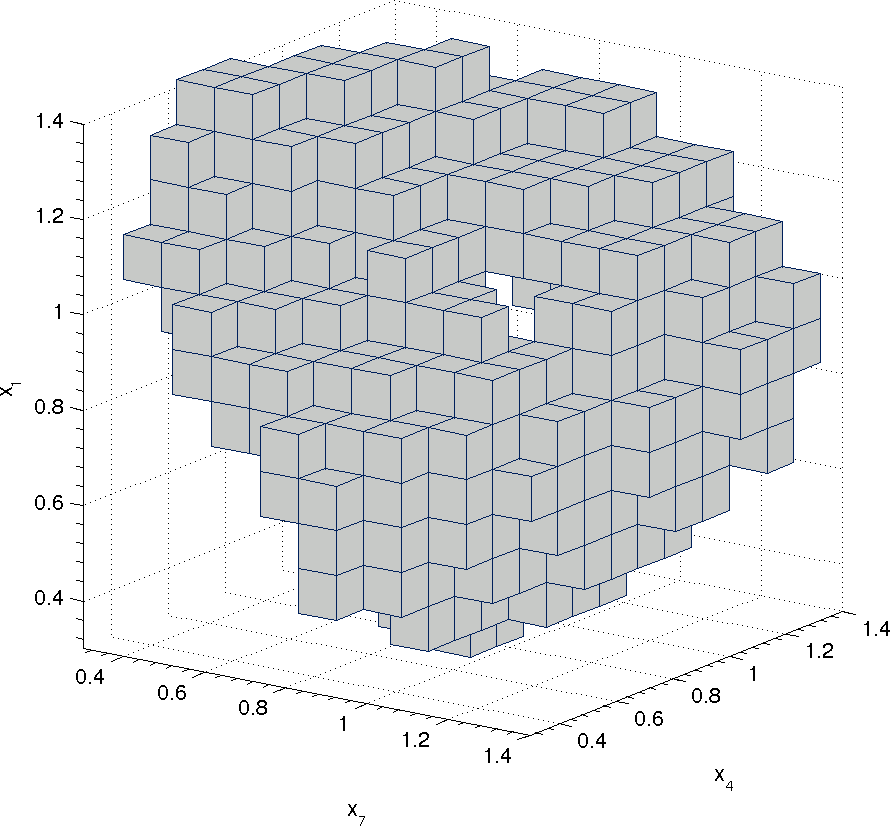}\\
 \centering \scriptsize{(a) $\ell = 28$}
\end{minipage}
\hfill
\begin{minipage}{0.49\textwidth}
\includegraphics[width = \textwidth]{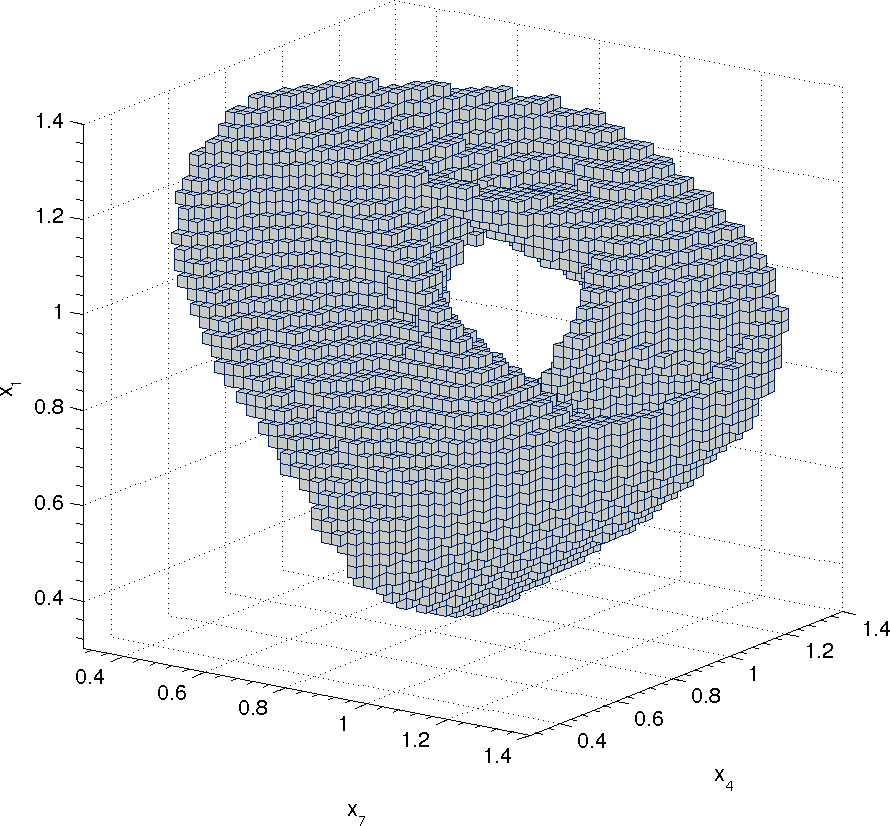}\\
\centering \scriptsize{(b) $\ell = 42$}
\end{minipage}\\[1em]
\begin{minipage}{0.49\textwidth}
 \includegraphics[width = \textwidth]{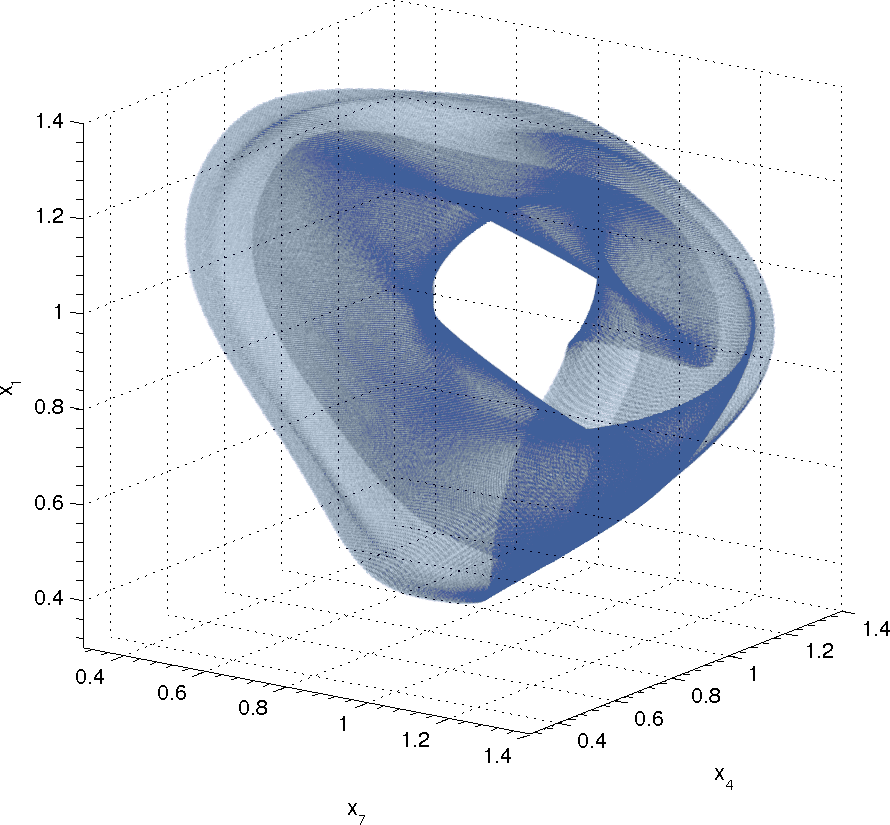}\\
 \centering \scriptsize{(c) $\ell = 63$}
\end{minipage}
\hfill
\begin{minipage}{0.49\textwidth}
\includegraphics[width = \textwidth]{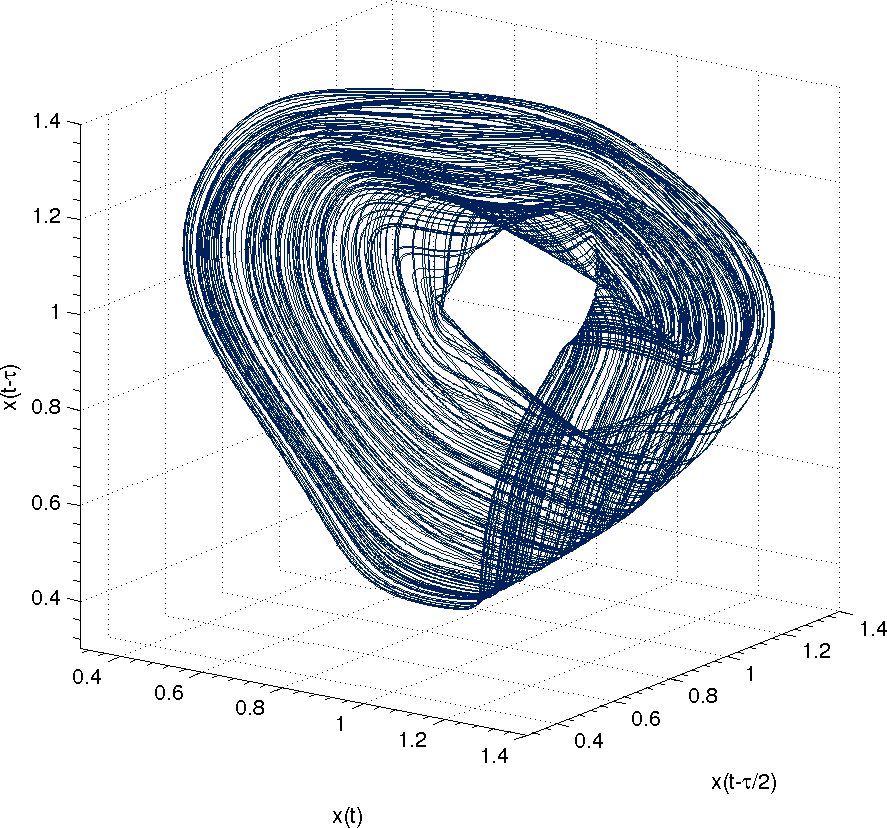}\\
\centering \scriptsize{(d) Simulation}
\end{minipage}
\caption{(a)-(c) Successively finer coverings of a relative global
	attractor after $\ell$ subdivision steps for the Mackey-Glass
	equation \eqref{eq:mg}; (d) direct simulation.}
\label{fig:mg_subdivision}
\end{figure}

Finally we conclude this section with an outlook and show a corresponding
invariant measure for the reconstructed Mackey-Glass attractor in
Figure \ref{fig:mg_comparison_measure}. This measure has been computed
with the software package GAIO \cite{DFJ-GAIO} which is based on the techniques
developed in \cite{DJ99}. However, a detailed investigation on how
to approximate invariant measures in infinite dimensional problems
efficiently using embedding theory will be done in future work.

\begin{figure}[ht]
\begin{center}
\includegraphics[width = 0.8\textwidth]{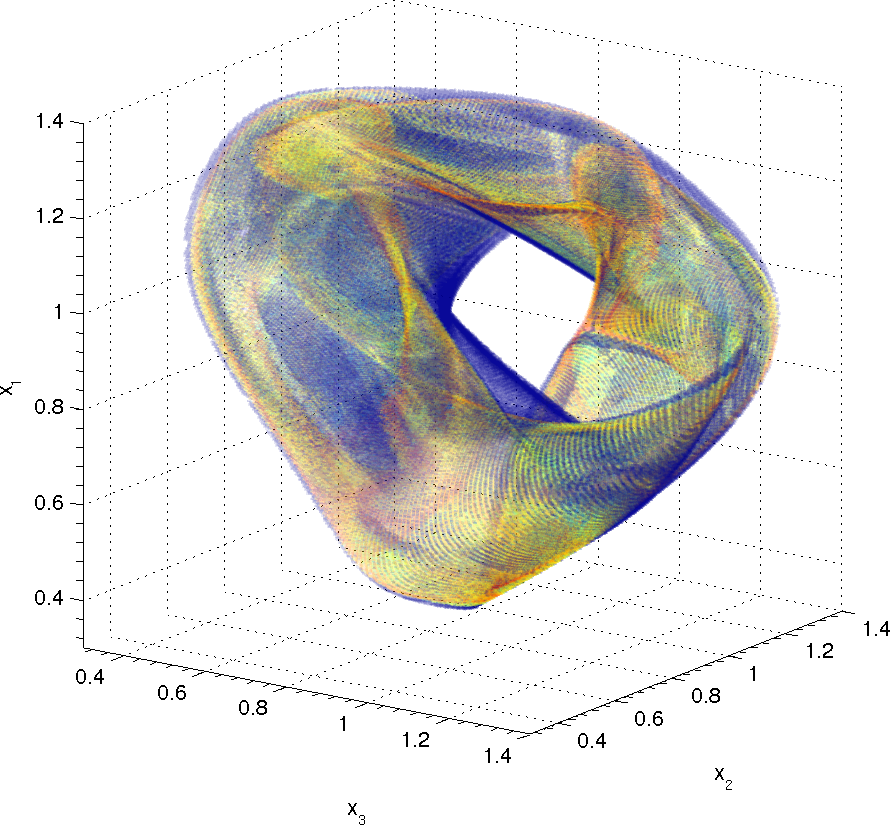}\\
\end{center}
\caption{Invariant measure for the reconstructed Mackey-Glass attractor. The density ranges from blue (low density) $\rightarrow$ green $\rightarrow$ yellow $\rightarrow$ red (high density). }
\label{fig:mg_comparison_measure}
\end{figure}

\section{Conclusion}
In contrast to the situation for finite dimensional dynamical systems, for
which there exists a wide range of advanced numerical tools, there are
currently only few options besides direct simulation for the numerical
computation of attractors of infinite dimensional dynamical systems generated e.~g.\ by
DDEs. In this paper we develop a general methodology for the computation
of finite dimensional compact attractors of infinite dimensional dynamical
systems, and illustrate its application to several DDEs.

Combining the delay embedding technique with a set-oriented method for the
computation of attractors, we obtain a flexible method for the 
analysis of infinite dimensional dynamical systems.  
More concretely, we use standard techniques for short-time simulation of
the system in question to approximate the infinite dimensional dynamics,
and employ the delay embedding technique on simulation results in order to
obtain a representation of the dynamics by a continuous map on a 
moderately sized space. This map, in turn, can be analyzed using
the subdivision scheme in order to compute a covering of an attractor. We
show that in this way one obtains sets that are in one-to-one
correspondence with the infinite dimensional attractor. 

The method proposed shares vital characteristics with its counterparts for
finite dimensional systems. Most importantly, the numerical effort
essentially depends on the dimension of the object to be computed, and
\emph{not} on the dimension of ambient space used for computations, that
is in the case of this paper, the dimension of the space used for the
delay embedding. In three examples, we have illustrated the suitability for
the computation of two-dimensional attractors. (In the case of the
Mackey-Glass equation, the attractor probably even has a box counting
dimension larger than $d=2.6$, see e.g. \cite{FAR82}.) Furthermore, due to
the set oriented nature of the underlying subdivision algorithm, the
method does not depend on any special geometric properties (besides
compactness) of the attractor.

\bibliographystyle{AIMS}
\bibliography{Takens-DDE}

\providecommand{\href}[2]{#2}
\providecommand{\arxiv}[1]{\href{http://arxiv.org/abs/#1}{arXiv:#1}}
\providecommand{\url}[1]{\texttt{#1}}
\providecommand{\urlprefix}{URL }
\begin{thebibliography}{10}

\bibitem{Arneodo1982}
\newblock A.~Arneodo,
\newblock \emph{Asymptotic Chaos},
\newblock CNRS, M{\'e}canique Statistique, Universit{\'e} de Nice, 1982.

\bibitem{bellen2013numerical}
\newblock A.~Bellen and M.~Zennaro,
\newblock \emph{Numerical methods for delay differential equations},
\newblock Oxford University Press, 2013.

\bibitem{Crawford84}
\newblock J.~D. Crawford and S.~Omohundro,
\newblock On the global structure of period doubling flows,
\newblock \emph{Physica D: Nonlinear Phenomena}, \textbf{13} (1984), 161 --
  180,
\newblock
  \urlprefix\url{http://www.sciencedirect.com/science/article/pii/0167278984902756}.

\bibitem{DFJ-GAIO}
\newblock M.~Dellnitz, G.~Froyland and O.~Junge,
\newblock The algorithms behind {GAIO} --- {S}et oriented numerical methods for
  dynamical systems,
\newblock in \emph{Ergodic {T}heory, {A}nalysis, and {E}fficient {S}imulation
  of {D}ynamical {S}ystems},
\newblock Springer-Verlag, Berlin, 2001,
\newblock 145--174, 805--807.

\bibitem{DH97}
\newblock M.~Dellnitz and A.~Hohmann,
\newblock A subdivision algorithm for the computation of unstable manifolds and
  global attractors,
\newblock \emph{Numerische Mathematik}, \textbf{75} (1997), 293--317.

\bibitem{DJ99}
\newblock M.~Dellnitz and O.~Junge,
\newblock {On the Approximation of Complicated Dynamical Behavior},
\newblock \emph{SIAM Journal on Numerical Analysis}, \textbf{36} (1999),
  491--515,
\newblock
  \urlprefix\url{http://epubs.siam.org/sinum/resource/1/sjnaam/v36/i2/p491\_s1}.

\bibitem{DJLMPPRT05}
\newblock M.~Dellnitz, O.~Junge, M.~Lo, J.~E. Marsden, K.~Padberg, R.~Preis,
  S.~Ross and B.~Thiere,
\newblock Transport of mars-crossing asteroids from the quasi-hilda region,
\newblock \emph{Physical Review Letters}, \textbf{94} (2005), 231102--1 --
  231102--4.

\bibitem{Dugundji51}
\newblock J.~Dugundji,
\newblock An extension of {T}ietze's theorem.,
\newblock \emph{Pacific J. Math.}, \textbf{1} (1951), 353--367,
\newblock \urlprefix\url{http://projecteuclid.org/euclid.pjm/1103052106}.

\bibitem{ds}
\newblock N.~Dunford and J.~T. Schwartz,
\newblock \emph{Linear {O}perators. {P}art {I}: {G}eneral {T}heory},
\newblock Interscience Publishers, Inc., 1957.

\bibitem{FAR82}
\newblock J.~D. Farmer,
\newblock Chaotic attractors of an infinite-dimensional dynamical system,
\newblock \emph{Physica~D}, \textbf{4} (1982), 366.

\bibitem{FD03}
\newblock G.~Froyland and M.~Dellnitz,
\newblock Detecting and locating near-optimal almost invariant sets and cycles,
\newblock \emph{SIAM Journal on Scientific Computing}, \textbf{24} (2003),
  1839--1863.

\bibitem{FHRSSG12}
\newblock G.~Froyland, C.~Horenkamp, V.~Rossi, N.~Santitissadeekorn and
  A.~Sen~Gupta,
\newblock Three-dimensional characterization and tracking of an agulhas ring,
\newblock \emph{Ocean Modelling}, \textbf{52-53} (2012), 69--75.

\bibitem{FLS10}
\newblock G.~Froyland, S.~Lloyd and N.~Santitissadeekorn,
\newblock Coherent sets for nonautonomous dynamical systems,
\newblock \emph{Physica D}, \textbf{239} (2010), 1527--1541.

\bibitem{HL93}
\newblock J.~K. Hale and S.~M. Verduyn~Lunel,
\newblock \emph{Introduction to functional differential equations},
\newblock Applied mathematical sciences, Springer-Verlag, New York, Berlin,
  Heidelberg, 1993.

\bibitem{HuntKaloshin99}
\newblock B.~R. Hunt and V.~Y. Kaloshin,
\newblock Regularity of embeddings of infinite-dimensional fractal sets into
  finite-dimensional spaces,
\newblock \emph{Nonlinearity}, \textbf{12} (1999), 1263,
\newblock \urlprefix\url{http://stacks.iop.org/0951-7715/12/i=5/a=303}.

\bibitem{KrOs1999}
\newblock B.~Krauskopf and H.~Osinga,
\newblock Two-dimensional global manifolds of vector fields,
\newblock \emph{Chaos: An Interdisciplinary Journal of Nonlinear Science},
  \textbf{9} (1999), 768--774,
\newblock
  \urlprefix\url{http://scitation.aip.org/content/aip/journal/chaos/9/3/10.1063/1.166450}.

\bibitem{KukavicaRobinson2004}
\newblock I.~Kukavica and J.~C. Robinson,
\newblock Distinguishing smooth functions by a finite number of point values,
  and a version of the takens embedding theorem,
\newblock \emph{Physica D}, \textbf{196} (2004), 45--66.

\bibitem{mackey1977}
\newblock M.~C. Mackey and L.~Glass,
\newblock Oscillation and chaos in physiological control systems,
\newblock \emph{Science}, \textbf{197} (1977), 287--289.

\bibitem{mezic:comparisoncomplexbehav}
\newblock I.~Mezi\'{c} and A.~Banaszuk,
\newblock Comparison of systems with complex behavior,
\newblock \emph{Physica D: Nonlinear Phenomena}, \textbf{197} (2004), 101 --
  133,
\newblock
  \urlprefix\url{http://www.sciencedirect.com/science/article/pii/S0167278904002507}.

\bibitem{R05}
\newblock J.~C. Robinson,
\newblock A topological delay embedding theorem for infinite-dimensional
  dynamical systems,
\newblock \emph{Nonlinearity}, \textbf{18} (2005), 2135--2143.

\bibitem{SahaiV09}
\newblock T.~Sahai and A.~Vladimirsky,
\newblock Numerical methods for approximating invariant manifolds of delayed
  systems,
\newblock \emph{{SIAM} J. Applied Dynamical Systems}, \textbf{8} (2009),
  1116--1135,
\newblock \urlprefix\url{http://dx.doi.org/10.1137/080718772}.

\bibitem{embedology}
\newblock T.~Sauer, J.~A. Yorke and M.~Casdagli,
\newblock Embedology,
\newblock \emph{J. Stat. Phys.}, \textbf{65} (1991), 579--616.

\bibitem{SHD01}
\newblock C.~Sch\"utte, W.~Huisinga and P.~Deuflhard,
\newblock Transfer operator approach to conformational dynamics in biomolecular
  systems,
\newblock in \emph{Ergodic {T}heory, {A}nalysis, and {E}fficient {S}imulation
  of {D}ynamical {S}ystems},
\newblock Springer-Verlag, Berlin, 2001,
\newblock 191--223.

\bibitem{Stark:99}
\newblock J.~Stark,
\newblock Delay embeddings for forced systems. i.\ deterministic forcing,
\newblock \emph{Journal of Nonlinear Science}, \textbf{9} (1999), 255--332.

\bibitem{T81}
\newblock F.~Takens,
\newblock Detecting strange attractors in turbulence,
\newblock \emph{Springer Lecture Notes in Mathematics}, \textbf{898} (1981),
  366--81.

\bibitem{WillardTopology}
\newblock S.~Willard,
\newblock \emph{General topology},
\newblock Addison-Wesley, Reading, Mass. [u.a.], 1970.

\end{thebibliography}

\end{document}